\documentclass[12pt,a4paper,reqno]{amsart}
\usepackage[english]{babel}
\usepackage[latin1]{inputenc}
\usepackage{graphicx,tikz}
\usepackage{amssymb, amsmath}
\usepackage{geometry, enumerate,stackrel,mathtools}
\usepackage{enumitem,centernot, color}
\usepackage[colorlinks=true,linkcolor=blue,urlcolor=blue,citecolor=blue]{hyperref}
\usepackage{multirow}
\usepackage{mathtools}

\usepackage{tabularx}
\usepackage{booktabs}
\usepackage{array}

\newcolumntype{C}{>{\centering\arraybackslash}X}

\newtheorem{theorem}{Theorem}[section]

\newtheorem{lemma}[theorem]{Lemma}
\newtheorem{corollary}[theorem]{Corollary}
\newtheorem{proposition}[theorem]{Proposition}

\theoremstyle{definition}

\newcommand{\ds}{\mathrm{\,d}s}
\newcommand{\dt}{\mathrm{\,d}t}
\newcommand{\dx}{\mathrm{\,d}x}

\begin{document}

\title[A conjecture on the norm of the $C^*-I$ for monotone sequences]{A solution to a conjecture on the norm of the Copson operator minus the Identity for monotone sequences}

\author[A. Ben Said]{Achraf Ben Said$^{*}$}

\address{Department of Analysis and Applied Mathematics, Complutense University of Madrid, 28040 Madrid, Spain.}
\email{achbensa@ucm.es}

\author[A. Gogatishvili]{Amiran Gogatishvili$^{**}$}

\address{Institute of Mathematics of the Czech Academy of Sciences, 115 67 Praha 1, Czech Republic.}
\email{gogatish@math.cas.cz}

\subjclass[2024]{Primary 26D15; Secondary 47B37.}

\keywords{Ces\`aro operator, Copson operator, monotone sequence, nonincreasing sequence, sharp constants, Hardy-type inequality}

\begin{abstract}
We investigate the operator distance between the Identity and the Copson operator on the cone of nonnegative, nonincreasing sequences within the Lebesgue sequence spaces $\ell^p(\mathbb{N})$. By proving the exact value of this distance, we confirm a conjecture originally formulated by A. Ben Said, S. Boza, and J. Soria in ``The norm of Hardy-type oscillation operators in the discrete and continuous settings'' (\emph{Analysis and Applications}, 2025).
\end{abstract}
\maketitle

\section{Introduction}
For a given sequence $\{x(n)\}_{n\geq 1}$ of real numbers, the Ces\`aro operator $C$ and its dual, also known as the Copson operator, $C^*$, are defined as
$$Cx(n)=\frac{1}{n} \sum_{k=1}^n x(k) \ \ \ \ {\rm and}\ \ \ \ C^*x(n)=\sum_{k=n}^{\infty} \frac{x(k)}{k},$$
respectively. The continuous counterparts of these operators are the classical Hardy average operator $H$ and the adjoint operator $H^*$. Both operators, $H$ and $H^*$, are given by 
 $$ Hf(t)=\frac1t\int_0^tf(s)\ds \ \ \ \ \ {\rm and}\ \ \ \ \ H^*f(t)=\int_t^\infty\frac{f(s)}{s}\ds,$$
assuming the integral expressions above converge for any such function~$f$ on~$(0,\infty)$. The boundedness of the discrete operators $C$ and $C^*$ on $\ell^p(\mathbb{N})$, as well as the continuous operators $H$ and $H^*$ on $L^p(\mathbb{R}^+)$, follows from the classical Hardy inequalities~\cite{HLP}.

\medskip

G. Bennett formulated in \cite{B} the question of determining the exact norm of $C-I$ as a map acting in $\ell^p(\mathbb{N})$, while he was interested in finding out the best constant for some equivalent norms for the Ces\`{a}ro sequence space $\mathrm{ces}_p(\mathbb{N})$. By means of Hilbert space methods, the operator norm was previously shown to be $1$ for $\ell^2(\mathbb{N})$ (see \cite{BHS}). G.~J.~O.~Jameson \cite{Jameson} conjectured that $\|C-I\|_{\ell^p(\mathbb{N})}=\frac{1}{p-1}$, for $1<p\le 2$ and he obtained the result for $p=4/3$. Very recently, G. Sinnamon \cite{Si} used Jameson's method to prove the conjecture and, in fact, answered Bennett's question in the full range $1<p\leq \infty$ (with a different value of the norm for $2<p\le\infty$). Specifically, he proved that if $m_p=\underset{t\in[0,1/2]}{\min}\big(pt^{p-1}+(1-t)^p-t^p\big)$, then
\begin{equation*}
\|C-I\|_{\ell^p(\mathbb{N})} = 
\begin{cases}
\frac{1}{p-1}, & \text{if } 1 < p \leq 2, \\[1ex]
m_p^{-1/p}, & \text{if } 2 \leq p < \infty, \\[1ex]
2, & \text{if } p = \infty.
\end{cases}
\end{equation*}
The operator norm of $C^* - I$ is obtained via a duality argument;
\begin{equation*}
\|C^*-I\|_{\ell^p(\mathbb{N})} = 
\begin{cases}
2, & \text{if } p=1, \\[1ex]
m_{p'}^{-1/p'}, & \text{if } 1 < p \leq 2, \\[1ex]
p-1, & \text{if } 2\leq p<\infty,
\end{cases}
\end{equation*}
where $p'=p/(p-1)$ is the conjugate exponent of $p$. In \cite{ABS}, the authors removed the dependency on $p'$ for the norm of $C^* - I$ when $1 < p \le 2$ by establishing that $m_{p'}^{-1/p'} = M_p^{1/p}$ in this range, where $M_p = \max_{t \in [0,1/2]} \big( pt^{p-1} + (1-t)^p - t^p \big)$.  

\medskip

Regarding the cone of nonnegative sequences in $\ell^p(\mathbb{N})$, the authors in \cite{ABS} proved the following results:
\begin{theorem} \cite[Remark 2.1, Theorem 2.4, and Proposition 2.5]{ABS} Let $1<p \leq \infty$ and let $x \in \ell^p(\mathbb{N})$ be a nonnegative sequence. If $1<p\leq 2$, then
\begin{equation}
\label{ABS_Eq1}
\|(C-I)x\|_p \leq (p-1)^{-1} \|x\|_p,
\end{equation}
and if $2 \leq p \leq \infty$, then
\begin{equation}
\label{ABS_Eq2}
\|(C-I)x\|_p \leq  \|x\|_p.
\end{equation}
Moreover, both inequalities \eqref{ABS_Eq1} and \eqref{ABS_Eq2} are sharp. 
\end{theorem}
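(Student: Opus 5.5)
The plan is to prove both inequalities by splitting at $p=2$, using sign information. For $1<p\le 2$ the bound \eqref{ABS_Eq1} is immediate, because $\|C-I\|_{\ell^p(\mathbb{N})}=\frac{1}{p-1}$ on all of $\ell^p(\mathbb{N})$ by Sinnamon's theorem. Nonnegativity cannot improve this constant, so only sharpness needs work. For $2\le p\le\infty$ the constant $1$ is smaller than the full-space norm $m_p^{-1/p}$, so positivity must be used essentially.

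\emph{Key pointwise facts for $x\ge 0$.} We have $Cx(n)\ge 0$ and $Cx(n)\ge x(1)/n$. Moreover
\[
(C-I)x(n)=\frac1n\sum_{k=1}^{n-1}x(k)-\frac{n-1}{n}x(n).
\]
So $(C-I)x(n)\ge -x(n)$, and where $(C-I)x(n)\ge 0$ we have $0\le (C-I)x(n)\le Cx(n)$. For $p=\infty$ this gives $|(C-I)x(n)|\le \max(Cx(n),x(n))\le \|x\|_\infty$. For $2\le p<\infty$ this crude bound is not enough, since $\|Cx\|_p$ can be as large as $p'\|x\|_p$. Instead I will mimic Jameson's and Sinnamon's method.

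\emph{Case $2\le p<\infty$.} Write $y=Cx$, so that $x(n)=ny(n)-(n-1)y(n-1)$ and
\[
(C-I)x(n)=(n-1)\bigl(y(n-1)-y(n)\bigr).
\]
The inequality becomes
\[
\sum_n (n-1)^p\,|y(n-1)-y(n)|^p\le \sum_n |ny(n)-(n-1)y(n-1)|^p,
\]
to be proved for all $y$ with $ny(n)-(n-1)y(n-1)\ge 0$. I would establish a termwise two-point inequality. Set $a=(n-1)y(n-1)$ and $b=ny(n)\ge a\ge 0$. The goal is an inequality of the form
\[
\Bigl|\frac{n-1}{n}\,b-a\Bigr|^p\le (b-a)^p+\phi_n(a)-\phi_{n+1}(b)
\]
with a telescoping correction $\phi_n$, chosen as a multiple of $|a|^p$ times a power of $n$. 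The condition $b\ge a$ is what replaces the function $pt^{p-1}+(1-t)^p-t^p$ with a constraint region where $1$ suffices. This is where I expect the main obstacle: choosing $\phi_n$ and verifying the one-variable calculus inequality for $p\ge 2$. I would first try the simplest alternative route, namely to check directly that
\[
|(C-I)x(n)|^p\le x(n)^p+\bigl(n\,Cx(n)^p-(n-1)\,Cx(n-1)^p\bigr)\cdot c_p
\]
for a suitable $c_p$. The bracket sums to at most $0$ in the limit after using $Cx\in\ell^p$ with $nCx(n)^p\to 0$ along a subsequence. The convexity of $t\mapsto t^p$ for $p\ge 2$ would handle the negative part $-x(n)\le (C-I)x(n)$.

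\emph{Sharpness.} For $1<p\le 2$ I take $x(n)=n^{-1/p-\varepsilon}$, which is nonnegative. Then $Cx(n)\approx \frac{1}{1-1/p-\varepsilon}\,x(n)$, so
\[
(C-I)x\approx\Bigl(\frac{1}{1/p'-\varepsilon}-1\Bigr)x,
\]
and the ratio tends to $p'-1=\frac{1}{p-1}$ as $\varepsilon\to0$. For $p\ge 2$ I take $x=e_N$, the $N$-th unit vector. Then $(C-I)x$ has entry $\frac1N-1$ at $N$ and $\frac1n$ for $n>N$. The $\ell^p$ norm of this tends to $1$ as $N\to\infty$, since the tail $\sum_{n>N}n^{-p}\to0$. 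This gives constant $1$ for all $2\le p\le\infty$.
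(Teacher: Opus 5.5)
The parts outside $2\le p<\infty$ are fine. The case $1<p\le2$ follows from Sinnamon's value of $\|C-I\|_{\ell^p(\mathbb{N})}$. Your $p=\infty$ argument via $-x(n)\le (C-I)x(n)\le Cx(n)$ is correct. The families $n^{-1/p-\varepsilon}$ and $e_N$ give the right sharp constants.

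\textbf{The gap.} You do not prove $\|(C-I)x\|_p\le\|x\|_p$ for $2\le p<\infty$, which is the only place where positivity is really used. The paper just quotes this theorem, so this step has to stand on its own. You never choose $\phi_n$ or $c_p$, and you explicitly leave the key inequality unverified. The constant is not a free parameter. If $c_p\ge0$, your pointwise inequality is false whenever $x(n)=0<Cx(n-1)$: the left side is $\bigl(\frac{n-1}{n}\bigr)^pCx(n-1)^p>0$, while $nCx(n)^p-(n-1)Cx(n-1)^p<0$. More precisely, two test cases pin it down:
\begin{itemize}
\item $x(n)=0$ with $n\to\infty$ (e.g.\ $x=e_1$) forces $c_p\le-\frac{1}{p-1}$;
\item $x(n)=Cx(n-1)>0$ forces $c_p\ge-1$.
\end{itemize}

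\textbf{How to close it.} Your route works with $c_p=-1$. Put $\lambda=1/n$, $u=Cx(n-1)$ and $v=x(n)$. Then $Cx(n)=w:=(1-\lambda)u+\lambda v$ and $(C-I)x(n)=(1-\lambda)(u-v)$. After multiplying by $\lambda$, the needed inequality is
\begin{equation*}
\lambda(1-\lambda)^p|u-v|^p\le\lambda v^p+(1-\lambda)u^p-w^p,\qquad u,v\ge0.
\end{equation*}
If $v\ge u$, fix $d=v-u$. The $u$-derivative of the right side is $p\bigl(\lambda(u+d)^{p-1}+(1-\lambda)u^{p-1}-(u+\lambda d)^{p-1}\bigr)\ge0$ by convexity of $t\mapsto t^{p-1}$; this is where $p\ge2$ is used. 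So the right side is at least its value at $u=0$, namely $(\lambda-\lambda^p)d^p$. This is $\ge\lambda(1-\lambda)^pd^p$ because $\lambda^{p-1}+(1-\lambda)^p\le1$. If $v\le u$, fix $d=u-v$ and argue the same way in $v$; you are reduced to $(1-\lambda)^{p-1}(1+\lambda)\le1$. Summing $|(C-I)x(n)|^p\le x(n)^p-\bigl(nCx(n)^p-(n-1)Cx(n-1)^p\bigr)$ over $n\le N$ telescopes to $\sum_{n\le N}|(C-I)x(n)|^p\le\sum_{n\le N}x(n)^p-N\,Cx(N)^p$, so no limiting argument is needed.

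Nonnegativity is essential in this step. For $p=3$, $\lambda=0.1$, $u=1$, $v=-1$ (with $|v|^p$ in place of $v^p$), the displayed inequality fails. It must fail for signed sequences, since $\|C-I\|_{\ell^p(\mathbb{N})}=m_p^{-1/p}>1$ for $p>2$.
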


\begin{theorem} \cite[Theorem 3.3 and Proposition 3.6]{ABS} Let $1\leq p < \infty$ and $x\in \ell^p(\mathbb{N})$ be a nonnegative sequence. If $1\leq p\leq 2$, then
\begin{equation}
\label{ABS_Eq3}
\|(C^*-I)x\|_p \leq M_p^{1/p} \|x\|_p,
\end{equation}
and if $2 \leq p < \infty$, then
\begin{equation}
\label{ABS_Eq4}
\|(C^*-I)x\|_p \leq (p-1) \|x\|_p.
\end{equation}
Furthermore, the constants in \eqref{ABS_Eq3} and \eqref{ABS_Eq4} are the best possible.  
\end{theorem}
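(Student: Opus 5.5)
The plan is to obtain the upper bounds in \eqref{ABS_Eq3} and \eqref{ABS_Eq4} directly from the unrestricted operator norms, and to put the real work into showing that these constants are already attained, asymptotically, on nonnegative sequences. Since every nonnegative $x\in\ell^p(\mathbb{N})$ belongs to $\ell^p(\mathbb{N})$, we have $\|(C^*-I)x\|_p\le \|C^*-I\|_{\ell^p(\mathbb{N})}\|x\|_p$. For $2\le p<\infty$ the displayed formula obtained from Sinnamon's theorem by duality gives $\|C^*-I\|_{\ell^p(\mathbb{N})}=p-1$. For $1<p\le 2$ it gives $m_{p'}^{-1/p'}$, and by the identity $m_{p'}^{-1/p'}=M_p^{1/p}$ from \cite{ABS} this equals $M_p^{1/p}$. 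For $p=1$, the maximum defining $M_1=\max_{t\in[0,1/2]}(2-2t)$ is attained at $t=0$, so $M_1=2$, which matches $\|C^*-I\|_{\ell^1(\mathbb{N})}=2$. Hence both inequalities hold, and only sharpness remains.

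\emph{Sharpness for $2\le p<\infty$.} I will use power sequences $x_\varepsilon(k)=k^{-1/p-\varepsilon}$, which lie in $\ell^p(\mathbb{N})$ for $\varepsilon>0$. Comparing the sum with an integral gives
\[
C^*x_\varepsilon(n)=\sum_{k\ge n}k^{-1-1/p-\varepsilon}=\frac{n^{-1/p-\varepsilon}}{1/p+\varepsilon}+O\big(n^{-1-1/p-\varepsilon}\big).
\]
Consequently $(C^*-I)x_\varepsilon(n)=\big(\tfrac{1}{1/p+\varepsilon}-1\big)x_\varepsilon(n)+O\big(n^{-1-1/p}\big)$. The error term has bounded $\ell^p$ norm uniformly in $\varepsilon$, while $\|x_\varepsilon\|_p\to\infty$ as $\varepsilon\to0$. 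Therefore the ratio $\|(C^*-I)x_\varepsilon\|_p/\|x_\varepsilon\|_p$ tends to $p-1$.

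\emph{Sharpness for $1\le p\le 2$.} Here power sequences only give $p-1\le M_p^{1/p}$, so the extremals must have a different shape. For $p=1$ the unit vectors $e_N$ already work. Indeed $(C^*-I)e_N(n)=1/N$ for $n<N$ and $1/N-1$ at $n=N$, so $\|(C^*-I)e_N\|_1=2-2/N\to 2$.

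For $1<p<2$ I will combine the two mechanisms: a spike of height $a$ at a large index $N$, together with a power-type background. My first choice of background is a tail $k^{-1/p-\varepsilon}$ supported on $k\ge N$, scaled by a factor $b$; if that fails to give the right constant, I will try a background supported on $k<N$ instead. The computation then proceeds as follows.
\begin{enumerate}
\item Compute $(C^*-I)x(n)$ separately on three ranges: $n<N$, where $C^*x$ is essentially constant; $n=N$, where the spike produces a negative value of size about $a$; and $n>N$, where the tail behaves as in the previous case.
\item Let $N\to\infty$ and $\varepsilon\to0$ in the appropriate order. The ratio $\|(C^*-I)x\|_p^p/\|x\|_p^p$ should then reduce to an explicit function of the single parameter $t=a/(a+b)\in[0,1/2]$ or a similar normalisation.
\item Match this function with $pt^{p-1}+(1-t)^p-t^p$, so that maximizing over $t$ gives $M_p$.
\end{enumerate}

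The main obstacle is this last matching: finding the correct two-parameter family and the right order of limits so that the limiting ratio is exactly $pt^{p-1}+(1-t)^p-t^p$. A useful guide is the extremal structure of Sinnamon's problem for $C-I$ on $\ell^{p'}$, transported through the duality that yields $m_{p'}^{-1/p'}=M_p^{1/p}$. The near-extremal pairs in that argument, after reversing the duality, should suggest nonnegative test sequences for $C^*-I$. It will then have to be checked by hand that these sequences can be taken nonnegative without losing the constant.
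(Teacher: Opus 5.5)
The paper does not prove this statement; it quotes it from Ben Said--Boza--Soria, so your argument has to stand on its own. Several parts of it are fine:
\begin{itemize}
\item reducing both upper bounds to the unrestricted norms of $C^*-I$, together with $M_1=2$;
\item the vectors $e_N$ for sharpness at $p=1$;
\item the power sequences for sharpness when $2\le p<\infty$.
\end{itemize}
The genuine gap is sharpness for $1<p<2$, which is the only part not already contained in the cited norm formulas. There you give a plan rather than a proof, and the plan cannot succeed as set up. A spike $ae_N$ contributes $\frac{a}{N}\chi_{[1,N]}-ae_N$ to $(C^*-I)x$. The first piece has $\ell^p$ norm $aN^{1/p-1}=o(a)$ because $p>1$. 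Meanwhile a background carrying a fixed share of $\|x\|_p^p$ is negligible at the single coordinate $N$. So as soon as $N\to\infty$, the ratio $\|(C^*-I)x\|_p^p/\|x\|_p^p$ tends to a convex combination of $1$ (the spike) and the ratio of the background alone. For the tail $k^{-1/p-\varepsilon}\chi_{[N,\infty)}(k)$ with $\varepsilon\to0$, that background ratio is $(p-1)^p$; the same happens for a background on $k<N$. Hence you get at most $1$. But $M_p>1$ for $1<p<2$, since $pt^{p-1}+(1-t)^p-t^p=1+pt^{p-1}+O(t)$ as $t\to0^+$. The spike is useless, and $t=a/(a+b)$ is not the right parameter.

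What is missing is that the free parameter must be the decay exponent of the truncated tail, and that exponent must not tend to $1/p$. Fix $t\in(0,1/2]$, take $x_N(k)=k^{-1/t}\chi_{[N,\infty)}(k)\ge 0$, and let $N\to\infty$. Then:
\begin{itemize}
\item $\|x_N\|_p^p\sim N^{1-p/t}/(p/t-1)$;
\item $C^*x_N(n)=tN^{-1/t}(1+O(1/N))$ for $n<N$;
\item $(C^*-I)x_N(n)=(t-1)n^{-1/t}(1+O(1/n))$ for $n\ge N$.
\end{itemize}
The constant block on $[1,N)$, which is exactly the region you singled out in your step 1, now carries a fixed fraction of the output. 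This gives
\[
\lim_{N\to\infty}\frac{\|(C^*-I)x_N\|_p^p}{\|x_N\|_p^p}=\Big(\frac{p}{t}-1\Big)t^p+(1-t)^p=pt^{p-1}+(1-t)^p-t^p .
\]
Maximizing over $t$ yields $M_p$, using nonnegative test sequences and no appeal to duality or to Sinnamon's extremals. With $1/t=1/p+\varepsilon$ and $\varepsilon\to0$, the first term degenerates. That is why your choice of tail only reproduces $(p-1)^p$.
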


On the cone of nonnegative, nonincreasing sequences in $\ell^p(\mathbb{N})$, we have the following result for the operator $C - I$:
\begin{theorem} \cite[Theorem 2.2 and Proposition 2.4]{BJ} Let $1<p \leq \infty$ and $x \in \ell^p(\mathbb{N})$ be a nonnegative, nonincreasing sequence. If $1<p\leq 2$, then
\begin{equation}
\label{BJ_Eq1}
\|(C-I)x\|_p \leq (p-1)^{-1} \|x\|_p,
\end{equation}
 if $2 \leq p<  \infty$, then
\begin{equation}
\label{BJ_Eq2}
\|(C-I)x\|_p \leq (p-1)^{-1/p} \|x\|_p,
\end{equation}
and if $p=\infty$, then 
\begin{equation}
\label{BJ_Eq3}
\|(C-I)x\|_\infty \leq  \|x\|_\infty,
\end{equation}
Moreover, the inequalities \eqref{BJ_Eq1}, \eqref{BJ_Eq2}, and \eqref{BJ_Eq3} are sharp. 
\end{theorem}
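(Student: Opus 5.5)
The plan is to treat the three ranges separately. Throughout I will use the representation of a nonnegative, nonincreasing sequence as a positive combination of initial-segment indicators. If $x(n)\downarrow 0$, set $a_m=x(m)-x(m+1)\ge 0$. Then $x=\sum_{m\ge1}a_m\chi_{[1,m]}$ and, since $(C-I)\chi_{[1,m]}(n)=\frac{m}{n}\chi_{\{n>m\}}$,
$$(C-I)x(n)=\frac1n\sum_{k=1}^n\big(x(k)-x(n)\big)=\frac{A(n-1)}{n}\ge0,\qquad A(j)=\sum_{m\le j} m\,a_m .$$
The extremal sequences for sharpness will be $\chi_{[1,m]}$ and the power sequences $n^{-1/p-\varepsilon}$.

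\emph{Case $p=\infty$.} Since $0\le (C-I)x(n)\le Cx(n)\le x(1)=\|x\|_\infty$, inequality \eqref{BJ_Eq3} holds. For sharpness, $x=\chi_{[1,m]}$ gives $(C-I)x(m+1)=m/(m+1)\to1$.

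\emph{Case $1<p\le 2$.} Nonincreasing nonnegative sequences are in particular nonnegative, so \eqref{BJ_Eq1} is exactly \eqref{ABS_Eq1}. For sharpness I will take $x(n)=n^{-1/p-\varepsilon}$. Comparison with integrals gives $Cx(n)=\frac{x(n)}{1-1/p-\varepsilon}\,(1+o(1))$, uniformly enough in $n$ that the $\ell^p$-norms are dominated by the divergent tails as $\varepsilon\to0^+$. Hence $\|(C-I)x\|_p/\|x\|_p\to \frac{1/p}{1-1/p}=\frac1{p-1}$.

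\emph{Case $2\le p<\infty$.} Sharpness is easy: for $x=\chi_{[1,m]}$ we have $\|x\|_p^p=m$ and
$$\|(C-I)x\|_p^p=m^p\sum_{n>m}n^{-p}=\frac{m}{p-1}(1+o(1)),$$
so the ratio tends to $(p-1)^{-1/p}$. For the inequality I would first prove the continuous analogue and then discretize; alternatively I would run the same computation directly with Abel summation. Let $f$ be nonincreasing with $f(t)=\mu((t,\infty))$ and $g(t)=\int_0^t s\,d\mu(s)$, so that $Hf-f=g/t$. Integration by parts gives
$$\int_0^\infty \Big(\frac{g}{t}\Big)^p dt=\frac{p}{p-1}\int_0^\infty \Big(\frac{g(t)}{t}\Big)^{p-1} d\mu(t)\cdot t\,\Big/ t^{0},\qquad \int_0^\infty f^p\,dt=p\int_0^\infty t\,f(t)^{p-1}\,d\mu(t).$$
More precisely, the first integral equals $\frac{p}{p-1}\int t^{2-p}g^{p-1}\,d\mu$. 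The target inequality is therefore
$$\int_0^\infty t\Big(\frac{g}{t}\Big)^{p-1}d\mu\le\int_0^\infty t f^{p-1}\,d\mu .$$
This does not follow pointwise, since $g/t$ need not be $\le f$.

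The main obstacle is this comparison. My first attempt is to write $(g/t)^{p-1}=(g/t)^{p-2}\cdot(g/t)$, use $p-2\ge0$ together with $g/t\le Hf$, and apply Hölder's inequality with exponents $\frac{p-1}{p-2}$ and $p-1$ with respect to $t\,d\mu$. This should produce an inequality in which $\|(C-I)x\|_p$ appears on both sides, and solving it would give the bound. If that fails, I would instead reduce to extremal rays. Both sides are expressed through the positive coefficients $a_m$, so I would try to show that the quotient $\|(C-I)x\|_p^p/\|x\|_p^p$ does not exceed its supremum over single indicators $\chi_{[1,m]}$; the crucial extra input here is convexity of $s\mapsto s^{p-1}$ for $p\ge2$. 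That supremum is $\sup_m m^{p-1}\sum_{n>m}n^{-p}\le \frac1{p-1}$ by the integral test.
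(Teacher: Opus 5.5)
The paper only quotes this theorem from the literature and gives no proof, so your argument has to stand on its own. Your treatment of $p=\infty$ and of $1<p\le 2$ is correct, and so are all three sharpness claims (indicators $\chi_{[1,m]}$ and powers $n^{-1/p-\varepsilon}$).

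\textbf{The gap.} What is missing is the upper bound $\|(C-I)x\|_p\le (p-1)^{-1/p}\|x\|_p$ for $2<p<\infty$; the case $p=2$ is covered by the first range. This bound is the whole difficulty of the theorem. You reduce it to comparing $\int t\,(g/t)^{p-1}\,d\mu$ with $\int t\,f^{p-1}\,d\mu$, but neither of your two strategies proves that comparison.

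\emph{The H\"older route is circular.} Since $Hf=f+g/t$, the majorant $g/t\le Hf$ says only $f\ge 0$. Whatever exponents you choose, it yields $\int t\,(g/t)^{p-1}\,d\mu\le\int t\,(f+g/t)^{p-1}\,d\mu$, whose right-hand side trivially dominates the left. For $p=2$ it reads $\|(H-I)f\|_2^2\le\|f\|_2^2+\|(H-I)f\|_2^2$, so there is nothing to solve.

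\emph{The extremal-ray route is not an argument as stated.} The quotient $\|(C-I)x\|_p^p/\|x\|_p^p$ is a ratio of two convex $p$-homogeneous functions of $(a_m)$, and it is not controlled by its values on the generators. The triangle inequality bounds the numerator by $\bigl(\sum_m a_m\|(C-I)\chi_{[1,m]}\|_p\bigr)^p$, but for the denominator it gives $\|x\|_p\le\sum_m a_m m^{1/p}$, which is the wrong direction. ``Convexity of $s\mapsto s^{p-1}$'' is never turned into the missing inequality.

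\emph{The identities fail for atomic $\mu$.} Your two integration-by-parts identities are false when $\mu$ has atoms, which is exactly the discrete and the extremal situation. For $\mu=\delta_1$, i.e.\ $f=\chi_{(0,1)}$, one has $\|f\|_p^p=1$ and $\|(H-I)f\|_p^p=\frac1{p-1}$. Yet $p\int t f^{p-1}\,d\mu=0$, and $\frac{p}{p-1}\int t^{2-p}g^{p-1}\,d\mu\in\{0,\frac{p}{p-1}\}$ depending on the value of $g$ at the atom. So a ``direct Abel summation'' with these formulas is not available.

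\textbf{One way to complete your plan.} Prove the continuous inequality for absolutely continuous nonincreasing $f$, extend it by approximation, and then discretize with $f_x=\sum_k x(k)\chi_{(k-1,k]}$. On $(k-1,k]$ one has $(H-I)f_x(t)=\frac{k}{t}(C-I)x(k)\ge (C-I)x(k)$. Hence $\|(C-I)x\|_p\le\|(H-I)f_x\|_p$, while $\|f_x\|_p=\|x\|_p$.

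For the continuous comparison, use $dg=t\,d\mu$ and substitute $u=g(t)$, $\theta(u)=1/t$. Then $\theta$ is nonincreasing on $(0,L)$ with $L=\int_0^\infty f\le\infty$, $g(t)/t=u\,\theta(u)$, and $f(t)=\Theta(u):=\int_u^L\theta$. The comparison becomes
\[
\int_0^L\bigl(u\,\theta(u)\bigr)^{p-1}\,du\le\int_0^L\Theta(u)^{p-1}\,du .
\]
Consider the inequality $\int_E u\,\theta(u)\,du\le\int_0^{|E|}\Theta(u)\,du$, which is linear in $\theta$. For $\theta=\chi_{(0,a)}$ with $0<a\le L$, the left side is at most $\int_{a-\min(a,|E|)}^{a}u\,du$, which equals the right side. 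Hence it holds for every nonincreasing $\theta$, so $u\theta$ is weakly majorized by the nonincreasing function $\Theta$. The Hardy--Littlewood--P\'olya inequality for the increasing convex function $s\mapsto s^{p-1}$ then finishes the proof.

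This is where your two intuitions legitimately enter. Extremal rays are used for a linear functional of $\theta$, not for the nonlinear quotient, and convexity enters through majorization. Alternatively, you may quote the known continuous theorem of Boza and Soria and only carry out the discretization.
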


For the operator $C^*-I$ acting on the cone of nonnegative, nonincreasing sequences in $\ell^p(\mathbb{N})$, we have the following theorem:
\begin{theorem}
\label{TheoremABS}
 \cite[Proposition 3.6 and Theorem 3.8]{ABS} Let $1\leq p < \infty$ and $x\in \ell^p(\mathbb{N})$ be a nonnegative sequence. If $p=1$, then
\begin{equation}
\label{ABS_Eq5}
\|(C^*-I)x\|_1 \leq 2e^{-1} \|x\|_1,
\end{equation}
and if $2 \leq p < \infty$, then
\begin{equation}
\label{ABS_Eq6}
\|(C^*-I)x\|_p \leq (p-1) \|x\|_p.
\end{equation}
Furthermore, the constants in \eqref{ABS_Eq5} and \eqref{ABS_Eq6} are optimal.  
\end{theorem}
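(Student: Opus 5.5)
The plan is to split according to $p$, and for the case $p=1$ to check first whether the bound can hold for the class of sequences as written. A direct test suggests it cannot. Take $x=e_k$, the $k$-th unit vector. Then
$$C^*e_k(n)=\frac1k \quad\text{for } n\le k,\qquad C^*e_k(n)=0 \quad\text{for } n>k .$$
Hence
$$\|(C^*-I)e_k\|_1=(k-1)\cdot\frac1k+\Big(1-\frac1k\Big)=2-\frac2k\longrightarrow 2>2e^{-1}.$$
This agrees with \eqref{ABS_Eq3}, since $M_1=\max_{t\in[0,1/2]}(2-2t)=2$. So \eqref{ABS_Eq5} cannot hold for arbitrary nonnegative sequences. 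I would therefore prove the $p=1$ part under the hypothesis that the surrounding text intends, namely that $x$ is in addition nonincreasing. The $p\ge2$ part I would prove exactly as worded, for all nonnegative $x$.

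\textbf{Case $2\le p<\infty$.} The inequality \eqref{ABS_Eq6} for all nonnegative $x$ is precisely \eqref{ABS_Eq4}, so it can be quoted. For optimality I would take the nonincreasing test sequences $x_\varepsilon(n)=n^{-1/p-\varepsilon}$. A comparison of the sum with an integral gives
$$C^*x_\varepsilon(n)=\Big(\tfrac1p+\varepsilon\Big)^{-1}x_\varepsilon(n)\,(1+o(1)) \quad\text{as } n\to\infty .$$
Consequently $\|(C^*-I)x_\varepsilon\|_p/\|x_\varepsilon\|_p\to p-1$ as $\varepsilon\to0^+$: the tail dominates both norms, since $\|x_\varepsilon\|_p^p\sim(p\varepsilon)^{-1}$. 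This shows the constant $p-1$ is best possible even on the smaller cone.

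\textbf{Case $p=1$, nonincreasing $x$.} I would use a layer-cake decomposition. Every nonnegative nonincreasing $x\in\ell^1$ can be written as
$$x=\sum_{N\ge1}\lambda_N\chi_{[1,N]},\qquad \lambda_N=x(N)-x(N+1)\ge0,\qquad \|x\|_1=\sum_N\lambda_N N .$$
Since $C^*-I$ is linear and the norm is subadditive,
$$\|(C^*-I)x\|_1\le\sum_N\lambda_N\,\|(C^*-I)\chi_{[1,N]}\|_1 .$$
It therefore suffices to show $\|(C^*-I)\chi_{[1,N]}\|_1\le 2e^{-1}N$ for every $N$. Writing $H_m$ for the harmonic numbers, we have
$$\|(C^*-I)\chi_{[1,N]}\|_1=\sum_{n=1}^N\big|H_N-H_{n-1}-1\big| .$$
The main obstacle is proving the sharp bound $2N/e$ for this sum for every $N$, not just asymptotically. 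Let $m$ be the index where the sign of $H_N-H_{n-1}-1$ changes, so that $m\approx N/e$. Then the terms above and below $m$ are telescoping-type sums with explicit values. I expect the positive and negative parts to be equal up to a boundary term, so the sum equals $2\sum_{n<m}(H_N-H_{n-1}-1)$. This quantity can be evaluated exactly; my guess is that it equals $2(m-1)\big(1-(H_N-H_{m-1})\big)+\dots$, and it should be bounded by $2N/e$ using $H_N-H_{m-1}\ge\log\frac{N+1}{m}$.

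For sharpness, the ratio for $\chi_{[1,N]}$ tends to $2\int_0^1|\log(1/t)-1|\,dt=2/e$ as $N\to\infty$, so $2e^{-1}$ is optimal.
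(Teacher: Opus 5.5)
Your observation about the hypothesis is correct: $\|(C^*-I)e_k\|_1=2-2/k$, so \eqref{ABS_Eq5} holds only on the nonincreasing cone announced in the sentence before the theorem. The $p\ge2$ part is fine (quoting \eqref{ABS_Eq4}, with sharpness via $n^{-1/p-\varepsilon}$). Your layer-cake reduction of $p=1$ to the single estimate $\|(C^*-I)\chi_{[1,N]}\|_1\le 2N/e$ for all $N$ is also sound. It is a different and more elementary route than the paper's. The paper recovers $p=1$ inside Theorem~\ref{Main_Theorem}: it dominates $\|(C^*-I)x\|_1$ by $\|(H^*-I)f_x\|_1$ using Jensen and the matrix $T$, then invokes the continuous bound \eqref{U1}. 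Your reduction instead relies on additivity of $\|\cdot\|_1$ on nonnegative sequences, so it is specific to $p=1$.

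The gap is that you never establish that estimate, and it is the whole content of the case $p=1$. Your sketch would not close it as written. The guessed leading term $2(m-1)\big(1-(H_N-H_{m-1})\big)$ is not the right quantity; the exact value is $2(m-1)(H_N-H_{m-1})$. Moreover, $H_N-H_{m-1}\ge\ln\frac{N+1}{m}$ is a \emph{lower} bound, which points the wrong way for an upper estimate. It is what you need for sharpness, not for the inequality.

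The missing computation is short. Put $a_n=H_N-H_{n-1}-1$ for $1\le n\le N$. Then $a_n-a_{n+1}=1/n>0$. Interchanging the order of summation gives $\sum_{n=1}^{j}\sum_{k=n}^{N}\frac1k=j+j(H_N-H_j)$, that is,
\[
\sum_{n=1}^{j}a_n=j\,(H_N-H_j),\qquad 0\le j\le N.
\]
For $j=N$ this vanishes (equivalently, $\|C^*y\|_1=\|y\|_1$ for $y\ge0$), so the positive and negative parts cancel exactly, with no boundary term. Since $(a_n)$ is decreasing, $\sum_{n=1}^N|a_n|=2\max_{0\le j\le N}j(H_N-H_j)$. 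Now use the upper bound $H_N-H_j\le\int_j^N\frac{\mathrm{d}t}{t}=\ln\frac Nj$ together with $\max_{t>0}t\ln\frac Nt=\frac Ne$; this yields $\|(C^*-I)\chi_{[1,N]}\|_1\le 2N/e$.

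For sharpness, choose $j=\lfloor N/e\rfloor$ and use $H_N-H_j\ge\ln\frac{N+1}{j+1}$. This gives $\liminf_{N\to\infty}\frac1N\|(C^*-I)\chi_{[1,N]}\|_1\ge 2/e$, which is Lemma~\ref{lemma1} for $p=1$, since $C^*e_N=\frac1N\chi_{[1,N]}$. Also fix the slip in your last line: $\int_0^1|\ln(1/t)-1|\,\mathrm{d}t$ already equals $2/e$, so the extra factor $2$ would give $4/e$.
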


\medskip

Further results on optimal constants for $C-I$, its dual $C^*-I$, and their continuous analogues can be found in \cite{Ash, BoSo, Ko, Ko2, Krug}.

\medskip

\medskip

Note that for $1 < p < 2$, the exact norm of $C^* - I$ restricted to the cone of nonnegative, nonincreasing sequences in $\ell^p(\mathbb{N})$, denoted $\|C^* - I\|_{\ell^p_{\mathrm{dec}}(\mathbb{N})}$, has remained open until now. Regarding this constant, Ben Said, Boza, and Soria \cite{ABS} conjectured that
\begin{equation}
\label{conj:norm_Cstar}
\|C^* - I\|_{\ell^p_{\mathrm{dec}}(\mathbb{N})}  = \left( \int_0^1 |\ln t + 1|^p \, \mathrm{d}t \right)^{1/p}\overset{\rm def}{=}C_p^{1/p} \quad \text{for } 1 < p < 2.
\end{equation}
In the same work \cite{ABS}, the authors posed an analogous conjecture for the dual of the continuous Hardy operator minus the Identity. This conjecture was recently confirmed by Ben Said and Sinnamon \cite{AS}, who completed the work initiated in \cite{ABS} by proving.

\begin{theorem} \cite[Theorem~1.2]{AS}, \cite[Proposition~4.4 and Theorem~4.5]{ABS}
\label{AS_Theorem} \\Let $1\le p<\infty$. Let $f$ be a nonnegative and nonincreasing function. If $1\leq p \leq 2$, then
\begin{equation}
\label{U1}
(p-1)\|f\|_p \le \|(H^{*}-I)f\|_p \le C_p^{1/p}\|f\|_p,
\end{equation}
and if $2\leq p<\infty$, then
\begin{equation}
\label{U2}
C_p^{1/p}\|f\|_p \le \|(H^{*}-I)f\|_p \le (p-1)\|f\|_p.
\end{equation}
Moreover, the constants $p-1$ and $C_p^{1/p}$ are optimal in both \eqref{U1} and \eqref{U2}.
\end{theorem}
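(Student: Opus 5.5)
The plan is to parametrize the cone of nonnegative, nonincreasing $f\in L^p$ by the measure $\nu=-\mathrm{d}f$, so that $f(t)=\nu((t,\infty))$ (right–continuous normalization, $f(\infty)=0$ since $f\in L^p$). Fubini then gives
\[
H^*f(t)=\int_t^\infty \ln\!\Big(\frac st\Big)\,\mathrm d\nu(s),\qquad (H^*-I)f(t)=\int_{(t,\infty)} k\Big(\frac ts\Big)\,\mathrm d\nu(s),\quad k(u)=(-\ln u-1)\chi_{(0,1)}(u).
\]
Both operators commute with dilations $f\mapsto f(\lambda\,\cdot)$, so only the shape of $\nu$ matters. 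I would handle optimality first, since it isolates the two candidate extremals. For $f=\chi_{(0,s)}$ we get $(H^*-I)f(t)=(\ln(s/t)-1)\chi_{(0,s)}(t)$, and the substitution $t=su$ gives $\|(H^*-I)f\|_p^p=s\,C_p=C_p\|f\|_p^p$. For $f_\alpha(t)=t^{-\alpha}$ one has $H^*f_\alpha=f_\alpha/\alpha$, so $(H^*-I)f_\alpha=(\frac1\alpha-1)f_\alpha$. Truncating $f_\alpha$ to $(\varepsilon,1/\varepsilon)$ (still nonincreasing after replacing it by $\min(f_\alpha,\varepsilon^{-\alpha})\chi_{(0,1/\varepsilon)}$) and letting $\alpha\to1/p$, $\varepsilon\to0$ shows the ratio $p-1$ is approached. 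Hence both constants are attained in the limit, and it remains to prove the inequalities.

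For the inequalities I would reduce to step functions $f=\sum_{j=1}^N a_j\chi_{(0,s_j)}$ with $a_j>0$, by monotone approximation and Fatou. In that case $(H^*-I)f=\sum_j a_j g_{s_j}$ with $g_s(t)=(\ln(s/t)-1)\chi_{(0,s)}(t)$. Every $g_s$ is positive on $(0,s/e)$ and negative on $(s/e,s)$, which is the reason the lower and upper constants swap at $p=2$. At $p=2$ I would compute everything explicitly. The Gram matrix is $\langle g_s,g_r\rangle=\min(s,r)\,\phi(\max/\min)$, and it should collapse to $\|(H^*-I)f\|_2=\|f\|_2$, since $C_2=1=p-1$. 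This is a sanity check, and it also suggests the right Bellman-type quantity in general.

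For $p\neq2$ I would use the substitution $t=e^{-x}$, $f(t)=e^{x/p}\varphi(x)$. After it, $(H^*-I)$ becomes a convolution operator on $\mathbb R$ acting on the weighted cone, and the two extremals correspond to the two ends of the range: a point mass of $\nu$ (characteristic function) and a "uniformly spread" $\nu$ (power function). The target statement is then that the functional $\Phi(\nu)=\|(H^*-I)f\|_p^p-c\,\|f\|_p^p$ has a sign on the cone. Here $c=C_p$ with $\Phi\le0$ for $1\le p\le2$ and $c=(p-1)^p$ with $\Phi\le0$ for $p\ge2$; symmetrically, the lower bounds are the corresponding $\Phi\ge0$ statements. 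I would attempt an induction on $N$: adding one more step $a\chi_{(0,s)}$ with $s$ smaller than all previous $s_j$ changes $(H^*-I)f$ only on $(0,s)$, by adding $a\,g_s$. So it suffices to prove a pointwise-integrated inequality of the form
\[
\int_0^s\big(|G+a g_s|^p-|G|^p\big)\,\mathrm dt\ \lessgtr\ c\int_0^s\big((F+a)^p-F^p\big)\,\mathrm dt,
\]
where on $(0,s)$ the previous $F=f$ and $G=(H^*-I)f$ are explicit: $F$ is constant there, and $G$ is of the form $\beta\ln(s/t)+\gamma$. This reduces the problem to a finite-dimensional inequality in the parameters $(a,\beta,\gamma,F)$.

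The main obstacle is exactly this last inequality, especially the upper bound $C_p$ for $1<p<2$. There the sign change of $g_s$ interacts with the sign of $G$, and $|\cdot|^p$ is not smooth enough for a naive Taylor argument. My first attempt would be to rescale to $s=1$, use homogeneity to fix $F=1$, and then reduce by convexity of $a\mapsto$ LHS$-$RHS in the right direction for $p\le2$ (respectively $p\ge2$) to checking the endpoints $a\to0$ and $a\to\infty$. In the limit $a\to\infty$ the characteristic-function value $C_p$ appears. The derivative at $a=0$ should reproduce the power-function constant $(p-1)^p$ via $\int_0^1 |\gamma+\beta\ln(1/u)|^{p-2}(\cdots)(\ln(1/u)-1)\,\mathrm du$. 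If the endpoint reduction fails, I would fall back to the approach of \cite{AS}: a direct variational argument showing that the extremal $\nu$ for the sup of $\Phi$ must be a single atom when $p<2$.
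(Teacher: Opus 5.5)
The paper does not prove this theorem; it quotes it from \cite{AS} and \cite{ABS}. Judged as a standalone proof, your proposal has a genuine gap. Your sharpness argument is fine: $\chi_{(0,s)}$ gives the ratio $C_p$ exactly, and for the power functions you should take $\alpha=1/p$ exactly before truncating. But none of the four inequalities is actually proved, and the step-by-step induction you propose provably cannot give two of them.

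Rescale to $s=1$ and put $u=\ln(1/t)$. In your notation $\beta=F$ and $\gamma=\sum_j a_j(\ln s_j-1)\ge -F$. Set $\Delta=F+\gamma\ge 0$. Then the change of $\|(H^*-I)f\|_p^p-c\|f\|_p^p$ caused by adding $a\chi_{(0,1)}$ is $N(F+a,\Delta)-N(F,\Delta)$, where
\[
N(\lambda,\Delta)=\int_0^\infty\big|\lambda(u-1)+\Delta\big|^p e^{-u}\,du-c\lambda^p,\qquad \partial_\lambda N(\lambda,\Delta)=p\lambda^{p-1}\big(R(\Delta/\lambda)-c\big),
\]
with $R(\eta)=\int_0^\infty|u-1+\eta|^{p-2}(u-1+\eta)(u-1)e^{-u}\,du$.

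Every $\Delta\ge0$ occurs (take a single previous step $\chi_{(0,e^{\Delta})}$). Hence your step inequality holds in all configurations if and only if $\sup_{\eta\ge0}R(\eta)\le c$ for the upper bounds, respectively $\inf_{\eta\ge0}R(\eta)\ge c$ for the lower bounds. Here $R(0)=C_p$. For $\eta>1$, integrating by parts against $(u-1)e^{-u}=-\frac{d}{du}(ue^{-u})$ gives $R(\eta)=(p-1)\int_0^\infty(u-1+\eta)^{p-2}u e^{-u}\,du$. This tends to $\infty$ if $p>2$ and to $0$ if $1<p<2$.

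So for $p>2$ no finite $c$, in particular not $(p-1)^p$, bounds the increments from above. For $1<p<2$ no $c>0$, in particular not $(p-1)^p$, bounds them from below. Concretely, adding a small $a\chi_{(0,1)}$ to $\chi_{(0,S)}$ with $S$ large (so $\Delta=\ln S$) already gives an increment of the wrong sign. The $(p-1)$ bounds are extremized by the spread-out power functions and are genuinely global: the increments have no sign, only their sum does. They need a different argument.

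For the two $C_p$ bounds, the same computation shows your scheme is equivalent to a one-variable inequality:
\begin{itemize}
\item $R(\eta)\le R(0)$ for all $\eta\ge0$ when $1\le p\le2$;
\item $R(\eta)\ge R(0)$ for all $\eta\ge0$ when $p\ge2$.
\end{itemize}
For $p=1$ this is immediate, since $R(\eta)=2(1-\eta)e^{\eta-1}$ on $[0,1]$ and $R=0$ beyond.

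This is the right reduction, but it comes from the homogeneity above, not from convexity in $a$. Both sides of your inequality are convex in $a$, and $\partial_\lambda^2N$ has the sign of $\int_0^\infty|u-1+\eta|^{p-2}(u-1)^2e^{-u}\,du-c$, which changes sign with $\eta$. So the endpoint check you describe is not justified as stated.

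For $p\ne1,2$ the one-variable inequality is exactly what you leave unproved. Your fallback, a variational argument that the extremal $\nu$ is a single atom, restates the goal rather than arguing for it. As it stands, the proposal establishes only the sharpness of the constants and, once the Gram computation is written out, the case $p=2$.
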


\medskip
Our main result, Theorem~\ref{Main_Theorem}, confirms the validity of the conjecture in \eqref{conj:norm_Cstar}. Together, Theorem~\ref{Main_Theorem} and Theorem~\ref{TheoremABS} yield the same upper constants for $C^*-I$ as those obtained for $H^*-I$ in Theorem~\ref{AS_Theorem}. However, we cannot obtain lower bounds for $C^*-I$ analogous to those for $H^*-I$ in Theorem~\ref{TheoremABS}, as no such finite constants exist in this case (see \cite{ABS2}).

\medskip

This paper is organized into two sections. In Section \ref{Preliminaries}, we introduce several preliminary lemmas that will be used to establish our core results. Section \ref{MainResults} presents the main results of this paper and several noteworthy corollaries.

\medskip

In what follows, for $m,n \in \mathbb{N}$, we adopt the convention that $\sum_{j=m}^n a(j) = 0$ whenever $n < m$.

\section{Preliminaries}
\label{Preliminaries}
In this section, we collect several preliminary results required for the proofs of our main theorems.

\medskip
We start with the following lemma, which will help establish the optimality of our target constant $C_p^{1/p}$.
\begin{lemma}
\label{lemma1}
Let $1 \le p < \infty$. For each $m \in \mathbb{N}$, let $e_m$ denote the $m$-th canonical basis vector of $\ell^p(\mathbb{N})$. Then,
\begin{equation}\label{eq:limit_Cstar}
\lim_{m \to \infty} \frac{\|(C^*-I)C^*e_m\|_p^p}{\|C^*e_m\|_p^p} = C_p.
\end{equation}
\begin{proof}
Let $1\leq p< \infty$ and let $m$ be a natural number. We have that
$$
C^*e_m(n)=\sum_{k=n}^\infty \frac{e_m(k)}{k}= \begin{cases}
\frac{1}{m}, & \text{if } n\leq m,\\
0,  & \text{if } n>m.
\end{cases}
$$
It is clear that $y=C^*e_m$ is nonnegative, nonincreasing sequence and it has $p$-norm $\|C^*e_m\|_p^p=m^{1-p}$, for all $m\in \mathbb{N}$. By applying $C^*$ to $y$ we obtain
$$
C^{*}y=C^*(C^*e_m(n))=\sum_{k=n}^\infty \frac{C^*e_m(k)}{k}= \begin{cases}
\frac{1}{m}\sum_{k=n}^m \frac{1}{k}, & \text{if } n\leq m,\\
0,  & \text{if } n>m.
\end{cases}
$$
So
\begin{equation}
\label{(C*-I)e_m}
|(C^*-I)C^*e_m(n)|= \begin{cases}
\frac{1}{m}\big|-1+\sum_{k=n}^m\frac{1}{k}\big| , & \text{if } n\leq m,\\
0,  & \text{if } n>m.
\end{cases}
\end{equation}
Therefore, we have to show that
\begin{equation}
\label{LIMIT}
\lim_{m \to \infty} \frac{\|(C^*-I)C^*e_m\|_p^p}{\|C^*e_m\|_p^p} = \lim_{m \to \infty} \frac{1}{m} \sum_{n=1}^m \left| -1 + \sum_{k=n}^m \frac{1}{k} \right|^p = C_p^p.
\end{equation}
Let $\{f_m\}_{ m\geq 1}$ be the sequence of function defined on $(0,1]$ by 
$$f_m(x)=\sum_{n=1}^m \bigg(-1+\sum_{k= \lceil mx \rceil}^m \frac{1}{k} \bigg) \chi_{(\frac{n-1}{m}, \frac{n}{m}]}(x),$$
where $t \mapsto \lceil t \rceil$ is the ceiling function. Note that
\begin{align*}
\int_0^1 |f_m(x)|^p \dx = \sum_{n=1}^m \int_{\frac{n-1}{m}}^{\frac{n}{m}} \bigg | -1+\sum_{k= \lceil mx \rceil}^m \frac{1}{k} \bigg | ^p \dx = \frac{1}{m} \sum_{n=1}^m \left| -1 + \sum_{k=n}^m \frac{1}{k} \right|^p.
\end{align*}
We now study the pointwise convergence of the sequence $\{f_m\}_{m \ge 1}$. Let $x \in (0,1]$. Then there exists $n \in \{1,\ldots, m\}$ such that $x \in \big( \frac{n-1}{m}, \frac{n}{m} \big]$. In this case, we have
\begin{equation*}
f_m(x) = -1 + \sum_{k= \lceil mx \rceil}^m \frac{1}{k} = -1 + H_m - H_{\lceil mx \rceil} + \frac{1}{\lceil mx \rceil},
\end{equation*}
where $H_n = \sum_{k=1}^n \frac{1}{k}$ denotes the $n$-th harmonic number. It is known (see, e.g., \cite{Knuth}) that
\begin{equation}
\label{harmonic}
H_n = \ln n + \gamma + \frac{1}{2n} - \varepsilon_n,
\end{equation}
where $\gamma$ is the Euler--Mascheroni constant and $0 \le \varepsilon_n \le \frac{1}{8n^2}$, which tends to $0$ as $n \to \infty$. Applying \eqref{harmonic}, we obtain
\begin{equation*}
f_m(x) = -1 + \ln \left( \frac{m}{\lceil mx \rceil} \right) + \frac{1}{2\lceil mx \rceil} + \frac{1}{2m} - \varepsilon_m + \varepsilon_{\lceil mx \rceil}.
\end{equation*}
Taking the limit as $m \to \infty$ in the expression above and using the fact that $\lim_{m \to \infty} \ln \big( \frac{m}{\lceil mx \rceil} \big) = -\ln(x)$, we conclude that
\begin{equation*}
\lim_{m\to \infty}f_m(x)=-1-\ln(x) \overset{\rm def}{=} f(x),
\end{equation*}
for all $x \in (0,1]$. We now seek a nonnegative function $g$ such that $|f_m(x)|^p \le g(x)$ for all $x \in (0,1]$ and $m \in \mathbb{N}$. Fixing $x \in (0,1]$, there exists $n \in \{1,\ldots, m\}$ such that $x \in \big( \frac{n-1}{m}, \frac{n}{m} \big]$, which implies $\lceil mx \rceil = n$. We have 
\begin{equation*}
\sum_{k=\lceil mx \rceil}^m \frac{1}{k} = \sum_{k=n}^m \frac{1}{k} = \frac{1}{n} + \sum_{k=n+1}^m \frac{1}{k} \le \frac{1}{n} + \sum_{k=n+1}^m \int_{k-1}^{k} \frac{\mathrm{d}t}{t} = \frac{1}{n} + \int_{n}^{m} \frac{\mathrm{d}t}{t} = \frac{1}{n} + \ln \left( \frac{m}{n} \right).
\end{equation*}
Since $x \le \frac{n}{m}$, it follows that $\frac{m}{n} \le \frac{1}{x}$, and hence $\ln \big( \frac{m}{n} \big) \le \ln \big( \frac{1}{x} \big)$. Therefore,
\begin{equation*}
\sum_{k=\lceil mx \rceil}^m \frac{1}{k} \le \frac{1}{n} + \ln \left( \frac{m}{n} \right) \le 1 - \ln x.
\end{equation*}
Therefore,
\begin{equation*}
f_m(x) = -1 + \sum_{k= \lceil mx \rceil}^m \frac{1}{k} \leq -\ln x \leq 1-\ln x.
\end{equation*}
On the other hand, since $n \le m$, we have
\begin{equation*}
f_m(x) = -1 + \sum_{k= \lceil mx \rceil}^m \frac{1}{k} = -1 + \sum_{k= n}^m \frac{1}{k} \ge -1 \ge -1 + \ln x.
\end{equation*}
Therefore, $|f_m(x)| \le 1 - \ln x$ for all $x \in (0,1]$ and $m \in \mathbb{N}$. Hence,
\begin{equation*}
|f_m(x)|^p \le (1-\ln x)^p = g(x)
\end{equation*}
for all $x \in (0,1]$ and $m \in \mathbb{N}$. Furthermore,
\begin{align*}
\int_0^1 g(x) \, \mathrm{d}x &= \int_0^1 (1-\ln x)^p \, \mathrm{d}x = \int_0^1 \left( \ln \frac{\mathrm{e}}{x} \right)^p \, \mathrm{d}x = \mathrm{e} \int_0^{1/\mathrm{e}} \left( \ln \frac{1}{u} \right)^p \, \mathrm{d}u \\
&< \mathrm{e} \int_0^1 \left( \ln \frac{1}{u} \right)^p \, \mathrm{d}u = \mathrm{e} \, \Gamma(p+1) < \infty,
\end{align*}
where $\Gamma$ denotes Euler's Gamma function. Thus, by the Lebesgue Dominated Convergence Theorem, we obtain
\begin{align*}
\lim_{m \to \infty} \frac{1}{m} \sum_{n=1}^m \left| -1 + \sum_{k=n}^m \frac{1}{k} \right|^p &= \lim_{m \to \infty} \int_0^1 |f_m(x)|^p \, \mathrm{d}x = \int_0^1 \lim_{m \to \infty} |f_m(x)|^p \, \mathrm{d}x \\ 
&= \int_0^1 |f(x)|^p \, \mathrm{d}x = \int_0^1 |1+\ln x|^p \, \mathrm{d}x = C_p.
\end{align*}
This completes the proof of \eqref{LIMIT}.
\end{proof}
\end{lemma}

We define the auxiliary vectors $\rho, d \in \mathbb{R}^m$ by
\begin{equation*}
    \rho(k) = 
    \begin{cases}
        0, & \text{if } k = 1, \\[1.5ex]
        \left( k \ln \!\left( \dfrac{k}{k-1} \right) \right)^{-1}\!, & \text{if } 1 < k \le m,
    \end{cases}
\end{equation*}
and
\begin{equation*}
   d(k) = 
    \begin{cases}
        \displaystyle\sum_{i=k}^{m-1} \frac{\rho(i+1)-\rho(i)}{i}, & \text{if } 1 \leq k \le m, \\[2.5ex]
         0, & \text{if } k = m.
    \end{cases}
\end{equation*}

The next result establishes several essential properties of the vectors $d$ and $\rho$.

\begin{lemma}
\label{Lema_d_rho}
Let $m \in \mathbb{N}$. The vectors $\rho$ and $d$ defined above satisfy the following properties:
\begin{enumerate}
    \item[(i)] The vector $\rho$ is nonnegative, strictly increasing, and bounded; explicitly,
    \begin{equation}
    \label{Lim_Rho=1}
        0 \le \rho(j-1) \le \rho(j) < 1, \quad \text{for all } 1 < j \le m.
    \end{equation}
    \item[(ii)] The vector $d$ is nonnegative and strictly decreasing, and satisfies the identity
    \begin{equation}
    \label{Sum_Vector_d}
         \sum_{k=1}^j d(k)=j d(j) + \rho(j),  \quad \text{for all } 1 \le j \le m.
    \end{equation}
\end{enumerate}
\begin{proof}
We begin with the proof of (i). Define $f \colon (1,\infty) \to \mathbb{R}$ by 
\begin{equation*}
    f(x) = x \ln \left( 1 + \frac{1}{x-1} \right).
\end{equation*}
Differentiating $f$ yields
\begin{equation*}
    f'(x) = \ln \left( 1 + \frac{1}{x-1} \right) - \frac{1}{x-1} < 0 \quad \text{for all } x > 1,
\end{equation*}
where the inequality follows from the standard estimate $\ln(1+t) < t$ for $t > 0$. Thus, $f$ is strictly decreasing on $(1,\infty)$, which implies that $1/f$ is strictly increasing. Consequently, the vector $\rho$ is strictly increasing. 

On the other hand, we have
\begin{align*}
\lim_{x \to \infty} f(x) &= \lim_{x \to \infty} x \ln \left( \frac{x}{x-1} \right) 
= \ln \left( \lim_{x \to \infty} \left( 1 + \frac{1}{x-1} \right)^x \right) 
= \ln(e) = 1.
\end{align*}
Since $f$ is strictly decreasing on $(1,\infty)$ with limit $1$ at infinity, we get $1 < f(x) < \infty$ for all $x \in (1,\infty)$. Consequently, $0 < 1/f(x) < 1$ on $(1,\infty)$, which yields
\begin{equation*}
  \rho(j) < 1 \quad \text{for all } j \in \mathbb{N}.
\end{equation*}

We now turn to the proof of (ii). Let $k \in \{1, \dots, m-1\}$. It is clear from the  definition of $d$ that
\begin{equation}
\label{Identity_dkdk+1_0}
    d(k) - d(k+1) = \frac{\rho(k+1) - \rho(k)}{k}.
\end{equation}
Since $\rho$ is strictly increasing, it follows immediately that $d$ is strictly decreasing, which gives
\begin{equation*}
    0 = d(m) < d(m-1) < \dots < d(2) < d(1).
\end{equation*}
To complete the proof of (ii), it remains to establish the Identity
\begin{equation*}
    \sum_{k=1}^j d(k) = j d(j) + \rho(j) \quad \text{for all } 1 \le j \le m.
\end{equation*}
We proceed by induction on $j$. The base case $j=1$ holds trivially. Assume the Identity holds for some $1 \le j < m$. For $j+1$, we have
\begin{align*}
    (j+1)d(j+1) + \rho(j+1) &= d(j+1) +  j d(j+1) + \rho(j+1)\\
    &= d(j+1) +  j d(j) + \rho(j) \\ 
    &= d(j+1) + \sum_{k=1}^j d(k) = \sum_{k=1}^{j+1} d(k),
\end{align*}
where the second equality follows from rewriting $d(j) - d(j+1) = \frac{\rho(j+1) - \rho(j)}{j}$ as 
\begin{equation}
\label{Identity_dkdk+1}
j d(j+1) + \rho(j+1) = j d(j) + \rho(j),
\end{equation}
and the third equality applies the induction hypothesis. This completes the proof.
\end{proof}
\end{lemma}

In addition to the vectors $\rho$ and $d$ introduced previously, the following matrix will play a central role in our analysis. For $m \in \mathbb{N}$, consider the square matrix $T = (T_{i,j})_{i,j=1}^{m} \in \mathbb{R}^{m \times m}$ defined by
\begin{equation}
\label{Matrix}
    T_{i,j} = 
    \begin{cases}
        d(\max\{i,j\}), & \text{if } i \neq j, \\[1ex]
        d(i) + \rho(i), & \text{if } i = j.
    \end{cases}
\end{equation}
Explicitly, $T$ has the symmetric form
\medskip
\begin{equation*}
T = \begin{pmatrix}
d(1) + \rho(1) & d(2) & \cdots & d(m-1) & d(m) \\[1ex]
d(2) & d(2) + \rho(2) & \cdots & d(m-1) & d(m) \\[1ex]
d(3) & d(3) & \cdots & d(m-1) & d(m) \\[1ex]
\vdots & \vdots & \ddots & \vdots & \vdots \\[1ex]
d(m-1) & d(m-1) & \cdots & d(m-1) + \rho(m-1) & d(m) \\[1ex]
d(m) & d(m) & \cdots & d(m) & d(m) + \rho(m)
\end{pmatrix}.
\end{equation*}

\medskip

The following lemma establishes crucial properties of the matrix $T$.
\begin{lemma}
For $m \in \mathbb{N}$, the matrix $T$ defined in \eqref{Matrix} is symmetric and non-negative. Moreover, its operator $p$-norm on $\mathbb{R}^m$ satisfies
\begin{equation}
\label{Norm_T}
    \|T\|_p \leq 1.
\end{equation}
\begin{proof}
By definition, $T$ is symmetric. Furthermore, since $d$ and $\rho$ are nonnegative by Lemma~\ref{Lema_d_rho}, all entries of $T$ are nonnegative. Now, let $1 \leq j \leq m$. Then
\begin{align*}
\sum_{i=1}^m T_{i,j} &= \sum_{i=1}^{j-1} T_{i,j} + T_{j,j} + \sum_{i=j+1}^{m} T_{i,j} = \sum_{i=1}^{j-1} d(j) + \big( d(j) + \rho(j)\big) + \sum_{i=j+1}^{m} d(i) \\
&= j d(j) + \rho(j) + \sum_{i=j+1}^{m} d(i) = \sum_{i=1}^{m} d(i) = m d(m) + \rho(m) = \rho(m),
\end{align*}
where we used \eqref{Sum_Vector_d} in the third and fifth equalities. Therefore,
\[
\|T\|_1 = \rho(m) = \|T\|_\infty,
\]
where the second equality follows from the symmetry of $T$. By the Riesz--Thorin interpolation theorem (see, e.g., \cite[Corollary~IV.2.3]{BS}),
\[
\|T\|_p \leq \rho(m) \leq 1
\]
for all $m \geq 1$, where the final inequality follows from \eqref{Lim_Rho=1}. This ends the proof.
\end{proof}
\end{lemma}

The following proposition is of importance.
\begin{proposition} 
\label{Key_Prop}
Let $1\leq p \leq 2$, $m\in \mathbb{N}$ and $b\in \mathbb{R}^m$ a nonnegative vector. Then, the following inequality holds:
\begin{equation}
\label{Ineq_AB}
\sum_{k=1}^m \bigg|\sum_{i=k}^m b(i)\bigg(-1+\sum_{n=k}^i \frac{1}{n}\bigg)\bigg|^p\leq \sum_{k=1}^m \bigg|  \sum_{i=k}^m b(i)\ln\bigg(i\frac{(k-1)^{k-1}}{k^k}\bigg)\bigg|^p.
\end{equation}
\begin{proof}
Fix $1\leq p \leq 2$ and $m \in \mathbb{N}$, and let $b \in \mathbb{R}^m$ be a nonnegative vector. We define the vectors $A, B, \in \mathbb{R}^m$ for each $k \in \{1, \dots, m\}$ by
\begin{align*}
    A(k) &= \sum_{i=k}^m b(i) \ln \left( i \frac{(k-1)^{k-1}}{k^k} \right), \\
    B(k) &= \sum_{i=k}^m b(i) \left( -1 + \sum_{n=k}^i \frac{1}{n} \right).
\end{align*}
To establish inequality \eqref{Ineq_AB}, we have to show that
\begin{equation*}
    \|B\|_p \leq \|A\|_p.
\end{equation*}
To this end, we will demonstrate that $B = T A$, where $T$ is the matrix defined in~\eqref{Matrix}. We have to show that 
\begin{equation}
\label{B=TA}
B(j)=\sum_{k=1}^m T_{j,k}A(k),
\end{equation}
for all $1 \leq j \leq m$. Let $j \in \{1,\cdots, m\}$. Then
\begin{align*}
\sum_{k=1}^m T_{j,k}A(k)&=\sum_{k=1}^m T_{j,k}\sum_{i=k}^m b(i) \ln \left( i \frac{(k-1)^{k-1}}{k^k} \right) \\
&=\sum_{i=1}^m b(i) \sum_{k=1}^{i} T_{j,k} \ln \left( i \frac{(k-1)^{k-1}}{k^k} \right).
\end{align*}
to prove \eqref{B=TA} it is enough to show that
\begin{equation}
\label{TkjCij}
   \sum_{k=1}^{i} T_{j,k} \ln \left( i \frac{(k-1)^{k-1}}{k^k} \right) = 
    \begin{cases}
        -1 + \sum_{n=j}^i \frac{1}{n}, & \text{if } j \leq i \leq m , \\[1ex]
        0, & \text{if } 1\leq i <j.
    \end{cases}
\end{equation}
Notice that for $1 \leq i < j$, we have
\begin{align*}
\sum_{k=1}^{i} T_{j,k} \ln \left( i \frac{(k-1)^{k-1}}{k^k}\right) 
&= d(j) \sum_{k=1}^{i} \ln \left( i \frac{(k-1)^{k-1}}{k^k} \right) \\
&= d(j) \left( i \ln(i) - \sum_{k=1}^{i} \big( k \ln(k) - (k-1) \ln(k-1) \big) \right) \\
&= d(j) \big( i \ln(i) - i \ln(i) \big) = 0,
\end{align*}
where we have used that $x \ln x=0$ for $x=0$. Then,
For $j \le i \le m$, we obtain
\begin{align*}
\sum_{k=1}^{i} T_{j,k} \ln \left( i \frac{(k-1)^{k-1}}{k^k}\right) 
&= \sum_{k=1}^{j-1} T_{j,k} \ln \left( i \frac{(k-1)^{k-1}}{k^k}\right) + T_{j,j}\ln \left( i \frac{(j-1)^{j-1}}{j^j}\right) \\
&\qquad +\sum_{k=j+1}^{i} T_{j,k} \ln \left( i \frac{(k-1)^{k-1}}{k^k}\right) \\
&=d(j) \sum_{k=1}^{j}\ln \left( i \frac{(k-1)^{k-1}}{k^k}\right)+ \rho(j)\ln \left( i \frac{(j-1)^{j-1}}{j^j}\right) \\
&\qquad +\sum_{k=j+1}^{i} d(k) \ln \left( i \frac{(k-1)^{k-1}}{k^k}\right) \\
&=d(j)j\ln \bigg(\frac{i}{j}\bigg) + \rho(j)\ln \left( i \frac{(j-1)^{j-1}}{j^j}\right) \\
&\qquad+\sum_{k=j+1}^{i} d(k) \ln \left( i \frac{(k-1)^{k-1}}{k^k}\right).
\end{align*}
That is
\begin{align}
\sum_{k=1}^{i} T_{j,k} \ln \left( i \frac{(k-1)^{k-1}}{k^k}\right)& =d(j)j\ln \bigg(\frac{i}{j}\bigg) + \rho(j)\ln \left( i \frac{(j-1)^{j-1}}{j^j}\right) \nonumber \\
&\qquad +\sum_{k=j+1}^{i} d(k) \ln \left( i \frac{(k-1)^{k-1}}{k^k}\right). \label{Identity_Tjk}
\end{align}
We now focus on the third summand on the right-hand side of the expression above.
\begin{align*}
\sum_{k=j+1}^{i} d(k) \ln \left( i \frac{(k-1)^{k-1}}{k^k}\right)&=\ln(i) \sum_{k=j+1}^{i} d(k)+\sum_{k=j+1}^{i} d(k)(k-1)\ln(k-1) \\
&\qquad -\sum_{k=j+1}^{i} d(k)k\ln(k) \\
&=\ln(i) \sum_{k=j+1}^{i} d(k)+\sum_{k=j}^{i-1} d(k+1)k\ln(k) -\sum_{k=j+1}^{i} d(k)k\ln(k)
 \end{align*}
\begin{align*}
&=\ln(i) \sum_{k=j+1}^{i} d(k)+d(j+1)j\ln(j)-d(i)i\ln(i) + \sum_{k=j+1}^{i-1} \big(d(k+1)-d(k)\big)k\ln(k)\\
& =\ln(i) \sum_{k=j+1}^{i} d(k)+d(j+1)j\ln(j)-d(i)i\ln(i)  + \sum_{k=j+1}^{i-1} \big(\rho(k)-\rho(k+1)\big)\ln(k) \\
&=\ln(i) \sum_{k=j+1}^{i} d(k)+d(j+1)j\ln(j)-d(i)i\ln(i) \\
&\qquad + \rho(j+1)\ln(j+1)-\rho(i)\ln(i-1)+ \sum_{k=j+2}^{i-1} \rho(k)\ln\bigg(\frac{k}{k-1}\bigg) \\
&=\ln(i) \sum_{k=j+1}^{i} d(k)+\big(d(j+1)j+\rho(j+1)\big)\ln(j)-\big(d(i)i+\rho(i)\big)\ln(i) \\
&\qquad + \rho(j+1)\ln\bigg(\frac{j+1}{j}\bigg)+\rho(i)\ln\bigg(\frac{i}{i-1}\bigg)+ \sum_{k=j+2}^{i-1} \rho(k)\ln\bigg(\frac{k}{k-1}\bigg) \\
&=\ln(i) \sum_{k=j+1}^{i} d(k)+\big(d(j)j+\rho(j)\big)\ln(j)-\big(d(i)i+\rho(i)\big)\ln(i) \\
&\qquad + \rho(j+1)\ln\bigg(\frac{j+1}{j}\bigg)+\rho(i)\ln\bigg(\frac{i}{i-1}\bigg)+ \sum_{k=j+2}^{i-1} \rho(k)\ln\bigg(\frac{k}{k-1}\bigg) \\
&=\ln(i) \sum_{k=j+1}^{i} d(k)+\ln(j)\sum_{k=1}^{j} d(k)-\ln(i)\sum_{k=1}^{i} d(k)  + \sum_{k=j+1}^{i} \rho(k)\ln\bigg(\frac{k}{k-1}\bigg)\\
&=-\ln\bigg(\frac{i}{j}\bigg)\sum_{k=1}^{j} d(k)  + \sum_{k=j+1}^{i} \rho(k)\ln\bigg(\frac{k}{k-1}\bigg)\\
&=-\ln\bigg(\frac{i}{j}\bigg)\big(jd(j)+\rho(j))  + \sum_{k=j+1}^{i} \rho(k)\ln\bigg(\frac{k}{k-1}\bigg),
\end{align*}
where we used \eqref{Identity_dkdk+1_0} in the fourth equality, \eqref{Identity_dkdk+1} in the seventh equality, and \eqref{Sum_Vector_d} in the eighth equality. To summarize, we have proved the following equality:
$$\sum_{k=j+1}^{i} d(k) \ln \left( i \frac{(k-1)^{k-1}}{k^k}\right)= -\ln\bigg(\frac{i}{j}\bigg)\big(jd(j)+\rho(j))  + \sum_{k=j+1}^{i} \rho(k)\ln\bigg(\frac{k}{k-1}\bigg).$$
Substituting this expression into \eqref{Identity_Tjk}, we obtain
\begin{align*}
\sum_{k=1}^{i} T_{j,k} \ln \left( i \frac{(k-1)^{k-1}}{k^k}\right)& =d(j)j\ln \bigg(\frac{i}{j}\bigg) + \rho(j)\ln \left( i \frac{(j-1)^{j-1}}{j^j}\right) \\
&\qquad +\sum_{k=j+1}^{i} d(k) \ln \left( i \frac{(k-1)^{k-1}}{k^k}\right) 
\end{align*}
\begin{align*}
&=d(j)j\ln \bigg(\frac{i}{j}\bigg) + \rho(j)\ln \left( i \frac{(j-1)^{j-1}}{j^j}\right) \\
&\qquad -\ln\bigg(\frac{i}{j}\bigg)\big(jd(j)+\rho(j))  + \sum_{k=j+1}^{i} \rho(k)\ln\bigg(\frac{k}{k-1}\bigg)\\
&=-\rho(j)(j-1)\ln\bigg(\frac{j}{j-1}\bigg)+  \sum_{k=j+1}^{i} \rho(k)\ln\bigg(\frac{k}{k-1}\bigg) \\
&=-\rho(j)j\ln\bigg(\frac{j}{j-1}\bigg)+  \sum_{k=j}^{i} \rho(k)\ln\bigg(\frac{k}{k-1}\bigg) =-1+\sum_{k=j}^{i} \frac{1}{k}.
\end{align*}
This ends the proof of \eqref{TkjCij}. Consequently,
$$\|B\|_p = \|TA\|_p \leq \|T\|_p \|A\|_p \leq \|A\|_p,$$
where we have used $\|T\|_p\leq 1$ from \eqref{Norm_T}. The proof is complete.
\end{proof}
\end{proposition}

Let $S$ denote the left-shift operator defined by 
$$(Sx)(n) = x(n+1)$$
for any sequence of real numbers $x = \{x(n)\}_{n \ge 1}$. The next two results due to G. Bennett \cite{B} are needed to deduce a consequence of our main theorem.
\begin{proposition}\cite[Corollary 10.13]{B}
\label{Proposition_Bennet} Let $1<p\leq 2$ and suppose that $x$ is a nonnegative, nonincreasing sequence. Then
\begin{equation}
\label{Ineq7}
\|x\|_p \leq (p-1)^{1/p} \|(C-S)x\|_p.
\end{equation}
Moreover, $(p-1)^{1/p}$ is the best constant possible in \eqref{Ineq7}.
\end{proposition}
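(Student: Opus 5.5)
The plan is to reduce \eqref{Ineq7} to a Hardy-type inequality between $C^*$ and $C$ on nonnegative sequences, prove that by a telescoping pointwise inequality, and check sharpness on the extremals $\chi_{[1,m]}$.

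\emph{Reduction.} Since $x\in\ell^p$ is nonnegative and nonincreasing, $x(n)\to0$, so $a(k)=x(k)-x(k+1)\ge0$ and $x(n)=\sum_{k\ge n}a(k)$. Put $y(k)=k\,a(k)\ge 0$, so that $x=C^*y$. Interchanging sums gives
$$Cx(n)=\sum_{k}a(k)\frac{\min\{k,n\}}{n}.$$
Hence
$$(C-S)x(n)=\frac1n\sum_{k\le n}k\,a(k)+\sum_{k>n}a(k)-\sum_{k\ge n+1}a(k)=\frac1n\sum_{k\le n}k\,a(k)=Cy(n).$$
In particular $(C-S)x\ge0$, and \eqref{Ineq7} becomes
$$\|C^*y\|_p^p\le (p-1)\,\|Cy\|_p^p \qquad\text{for all nonnegative } y.$$
By monotone convergence it is enough to treat finitely supported $y$, that is, $x$ constant beyond some index.

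\emph{Sharpness.} I check this first, since it also identifies the extremals. Take $x=\chi_{[1,m]}$, so $\|x\|_p^p=m$. Then $(C-S)x(n)=0$ for $n<m$, $(C-S)x(m)=1$, and $(C-S)x(n)=m/n$ for $n>m$. Therefore
$$\|(C-S)x\|_p^p=1+m^p\sum_{n>m}n^{-p}=\frac{m}{p-1}\,(1+o(1)).$$
So $\|x\|_p^p/\|(C-S)x\|_p^p\to p-1$ as $m\to\infty$, and $(p-1)^{1/p}$ cannot be improved.

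\emph{Inequality.} Let $U(n)=\sum_{k\le n}x(k)=n\,Cx(n)$ and $v(n)=(C-S)x(n)=U(n)/n-x(n+1)\ge0$. I aim for a pointwise telescoping bound of the form
$$x(n+1)^p\le (p-1)\,v(n)^p+F(n)-F(n+1),$$
with $F(n)=c_p\,n\,(U(n)/n)^p=c_p\,n\,Cx(n)^p$ chosen to match the extremal above. On $\chi_{[1,m]}$ the loss is concentrated in the tail, where $U$ is constant and $v(n)=m/n$. Summing over $n$, with $F(1)$ controlled by the term $x(1)^p$ and $F(n)\to0$ for finitely supported $y$ (the tail of $F$ behaves like $n^{1-p}U^p\to0$), yields $\sum x(n)^p\le(p-1)\sum v(n)^p$. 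To establish the pointwise bound, fix $n$ and set $s=U(n)/n\ge t=x(n+1)$, so that $U(n+1)=ns+t$. The claim becomes a two-variable inequality in $(s,t)$ with $0\le t\le s$, homogeneous of degree $p$, hence a one-variable inequality in $\lambda=t/s\in[0,1]$ depending on $n$. This I would attack by convexity of $u\mapsto u^p$ and the restriction $1<p\le 2$.

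\emph{Main obstacle.} The hard step is choosing $c_p$ (and possibly an $n$-dependent correction) so that the one-variable inequality holds uniformly in $n$ for all $1<p\le 2$. It must be exactly tight along the tail of the extremal, where $t=0$ beyond $m$. If no such choice works, my fallback is Bennett's own route \cite[Corollary 10.13]{B}: first prove the $p=2$ case via the identity $\|C^*y\|_2=\|Cy\|_2$ for nonnegative $y$, then handle $1<p<2$ by a factorization or convexity argument on the cone of nonincreasing sequences, exploiting that $x\mapsto x^{p}$ preserves monotonicity.
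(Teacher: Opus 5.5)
The paper does not prove this proposition; it is quoted from Bennett. So your argument has to stand on its own, and as written it has a gap at the central step. Your reduction $(C-S)C^*y=Cy$ is correct, and so is the sharpness computation with $\chi_{[1,m]}$. But the inequality itself, which is the whole content of the statement, is not proved: you set up a telescoping scheme and leave both the choice of $c_p$ and the resulting one-variable inequality open.

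The fallback does not close the gap either. The identity $\|C^*y\|_2=\|Cy\|_2$ is false for sequences: for $y=e_1$ the two sides are $1$ and $\pi/\sqrt{6}$. That identity holds for $H,H^*$ on $L^2(0,\infty)$; in the discrete case one only has $\|C^*y\|_2\le\|Cy\|_2$ for $y\ge0$. The passage from $p=2$ to $1<p<2$ is also left unspecified.

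Your scheme does work, and the missing step is short. Telescoping from $n=1$ leaves the extra term $x(1)^p+F(1)=(1+c_p)x(1)^p$, so you need $c_p\le-1$. Take $c_p=-1$, i.e.\ $F(n)=-n\,Cx(n)^p$. The pointwise claim becomes
\[
x(n+1)^p+n\,Cx(n)^p\le (p-1)\big((C-S)x(n)\big)^p+(n+1)\,Cx(n+1)^p .
\]
If $Cx(n)=0$, everything vanishes. Otherwise normalize $Cx(n)=1$ and put $\mu=1-x(n+1)=(C-S)x(n)\in[0,1]$; this uses $x(n+1)\le Cx(n)$. Since $(n+1)Cx(n+1)=n+1-\mu$, Bernoulli's inequality gives
\[
(n+1)Cx(n+1)^p=(n+1)\big(1-\tfrac{\mu}{n+1}\big)^p\ge n+1-p\mu .
\]
It therefore suffices that
\[
\psi(\mu)=1-p\mu+(p-1)\mu^p-(1-\mu)^p\ge0,\qquad 0\le\mu\le1 .
\]
For $p=2$ you have $\psi\equiv0$. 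For $1<p<2$, note $\psi(0)=\psi(1)=\psi'(0)=0$, and $\psi''(\mu)=p(p-1)\big[(p-1)\mu^{p-2}-(1-\mu)^{p-2}\big]$ changes sign exactly once, from positive to negative. So $\psi$ lies above its zero tangent at $0$ on the convex part and above its chord on the concave part.

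Summing over $1\le n\le N$ gives
\[
\sum_{n=1}^{N+1}x(n)^p\le (p-1)\sum_{n=1}^{N}\big((C-S)x(n)\big)^p+(N+1)^{1-p}\Big(\sum_{k=1}^{N+1}x(k)\Big)^p .
\]
The last term tends to $0$ for finitely supported $x$, and your monotone truncation in $y$ then gives the general case.

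One small correction to your heuristic. The pointwise inequality is exactly tight where $x(n+1)=Cx(n)$, i.e.\ on the flat part of $\chi_{[1,m]}$, not on the tail as you expected. On the tail it has slack of order $Cx(n)^p/n$, which sums to $O(1)$ against $\|x\|_p^p=m$. This is consistent with your sharpness computation.
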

 
\begin{lemma}\cite [Lemma 4.7]{B}
\label{lemma3} Let $p > 1$. Then, 
\begin{equation}
\label{Ineq8}
\lim_{n\to \infty} n^{p-1} \sum_{k=n}^{\infty} \frac{1}{k^p}= \frac{1}{p-1}.
\end{equation}
\end{lemma}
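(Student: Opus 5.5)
The plan is to sandwich the tail $\sum_{k=n}^\infty k^{-p}$ between two integrals of $t^{-p}$ and then squeeze. This is the same integral comparison already used in the proof of Lemma~\ref{lemma1} to bound harmonic sums.

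\textbf{Pointwise comparison.} The function $t\mapsto t^{-p}$ is positive and decreasing on $(0,\infty)$ because $p>1$. Hence for every $k\ge 2$,
\begin{equation*}
\int_k^{k+1} \frac{\mathrm{d}t}{t^p}\le \frac{1}{k^p}\le \int_{k-1}^{k}\frac{\mathrm{d}t}{t^p}.
\end{equation*}

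\textbf{Summation.} Fix $n\ge 2$ and sum these inequalities over $k\ge n$. The intervals telescope, so
\begin{equation*}
\frac{n^{1-p}}{p-1}=\int_n^\infty \frac{\mathrm{d}t}{t^p}\le \sum_{k=n}^\infty \frac{1}{k^p}\le \int_{n-1}^\infty \frac{\mathrm{d}t}{t^p}=\frac{(n-1)^{1-p}}{p-1}.
\end{equation*}
Both integrals are finite precisely because $p>1$. This is the only place the hypothesis enters, and it also guarantees that the series converges.

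\textbf{Squeeze.} Multiplying by $n^{p-1}$ gives
\begin{equation*}
\frac{1}{p-1}\le n^{p-1}\sum_{k=n}^\infty \frac{1}{k^p}\le \frac{1}{p-1}\Big(\frac{n}{n-1}\Big)^{p-1}.
\end{equation*}
Since $\big(\frac{n}{n-1}\big)^{p-1}\to 1$ as $n\to\infty$, the squeeze theorem yields \eqref{Ineq8}.

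There is no genuine obstacle. The only points needing care are to take $n\ge 2$ so that the lower limit $n-1$ of the upper integral is positive, and to note that $p>1$ makes both improper integrals converge.
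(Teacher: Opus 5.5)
Your proof is correct and complete. Comparing the sum with integrals of the decreasing function $t\mapsto t^{-p}$ for $n\ge 2$ gives $\frac{1}{p-1}\le n^{p-1}\sum_{k=n}^\infty k^{-p}\le \frac{1}{p-1}\bigl(\frac{n}{n-1}\bigr)^{p-1}$, and the squeeze finishes the argument. The paper gives no proof of its own here but quotes the result from Bennett \cite[Lemma~4.7]{B}, so there is no in-paper argument to compare with; your sandwich argument is a standard, self-contained proof of the fact.
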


\section{Main Results}
\label{MainResults}
 In this section, we present our principal results and their corollaries. 

\medskip

Our main theorem is stated below. Although the cases $p=1$ and $p=2$, including the optimality of the constants $C_1 = 2/e$ and $C_2 = 1$, were established in \cite{ABS}, we include them here because our approach provides a new, unified proof.

\begin{theorem}
\label{Main_Theorem}
Let $1 \leq p \leq 2$ and let $x\in \ell^p(\mathbb{N})$ be a nonnegative, nonincreasing sequence. Then
\begin{equation}
\label{Ineq3}
\|(C^*-I)x\|_p \leq C_p^{1/p}\|x\|_p,
\end{equation}
where $C_p^{1/p}$ is defined in \eqref{conj:norm_Cstar}. Moreover, the constant $C_p^{1/p}$ is optimal in \eqref{Ineq3}.
\begin{proof}
Let $1\leq p \leq 2$ and let $x\in \ell^p(\mathbb{N})$ be a nonnegative, nonincreasing sequence. Since finitely supported sequences are dense in $\ell^p(\mathbb{N})$, it suffices to prove the result for $x$ with finite support. Then, there is a natural number $m$ such that 
$$x(1)\geq x(2)\geq \cdots \geq x(m)>0.$$
and 
$$x(n)=0,$$
for all $n>m$. Let $f_x:(0,\infty) \to \mathbb{R}$ be the simple function given by 
$$f_x(s)=\sum_{k=1}^m x(k)\chi_{(k-1,k]}(s).$$
It is clear that $f_x$ is a nonnegative, nonincreasing  and $\|f_x\|_{L^p(\mathbb{R}^+)}=\|x\|_{\ell^p}$. Applying inequality \eqref{U1}, we obtain
$$\|(H^*-I)f_x\|_{L^p(\mathbb{R}^+)} \leq C(p)^{1/p}\|f_x\|_{L^p(\mathbb{R}^+)}=C(p)^{1/p}\|x\|_{\ell^p}.$$
Therefore, in order to prove \eqref{Ineq3} it suffices to prove that 
\begin{equation}
\label{Ineq4}
\|(C^*-I)x\|_{\ell^p}\leq \|(H^*-I)f_x\|_{L^p(\mathbb{R}^+)}.
\end{equation}
Now, if $k-1<s\leq k$ we have that
\begin{align*}
H^*f_x(s)&=\int_s^\infty f_x(t) \frac{\dt}{t}=x(k)\int_s^k \frac{\dt}{t}+\sum_{n=k+1}^m x(n)\int_{n-1}^n \frac{\dt}{t}\\
&=x(k)\ln\bigg(\frac{k}{s}\bigg)+\sum_{n=k+1}^m x(n)\ln\bigg(\frac{n}{n-1}\bigg).
\end{align*}
So
\begin{align*}
H^*f_x(s)=\sum_{k=1}^m \bigg(x(k)\ln\bigg(\frac{k}{s}\bigg)+\sum_{n=k+1}^m x(n)\ln\bigg(\frac{n}{n-1}\bigg)\bigg)\chi_{(k-1,k]}(s).
\end{align*}
Hence
\begin{align*}
(H^*-I)f_x(s)=\sum_{k=1}^m \bigg(x(k)\ln\bigg(\frac{k}{s}\bigg)-1+\sum_{n=k+1}^m x(n)\ln\bigg(\frac{n}{n-1}\bigg)\bigg)\chi_{(k-1,k]}(s).
\end{align*}
Thus
\begin{align*}
\|(H^*-I)f_x\|_{L^p(\mathbb{R}^+)}^p=\sum_{k=1}^m \int_{k-1}^k \bigg| x(k)\bigg(\ln\bigg(\frac{k}{s}\bigg)-1\bigg)+\sum_{n=k+1}^m x(n)\ln\bigg(\frac{n}{n-1}\bigg)\bigg|^p\ds.
\end{align*}
On the other hand, we have that
\begin{align*}
\|(C^*-I)x\|_{\ell^p}^p=\sum_{k=1}^m \bigg|-x(k)+\sum_{n=k}^m \frac{x(n)}{n}\bigg|^p.
\end{align*}
Then, proving the inequality \eqref{Ineq4} is equivalent to proving the following inequality
\begin{align*}
&\sum_{k=1}^m \int_{k-1}^k \bigg| x(k)\bigg(\ln\bigg(\frac{k}{s}\bigg)-1\bigg)+\sum_{n=k+1}^m x(n)\ln\bigg(\frac{n}{n-1}\bigg)\bigg|^p\ds \\
&\qquad \geq \sum_{k=1}^m \bigg|-x(k)+\sum_{n=k}^m \frac{x(n)}{n}\bigg|^p.
\end{align*}
Now, by taking into account that $t \mapsto |t|^p$ is a convex function, we have that
\begin{align*}
&\sum_{k=1}^m \int_{k-1}^k \bigg| x(k)\bigg(\ln\bigg(\frac{k}{s}\bigg)-1\bigg)+\sum_{n=k+1}^m x(n)\ln\bigg(\frac{n}{n-1}\bigg)\bigg|^p\ds \\
&\qquad \geq
\sum_{k=1}^m \bigg|\int_{k-1}^k  \bigg(x(k)\bigg(\ln\bigg(\frac{k}{s}\bigg)-1\bigg)+\sum_{n=k+1}^m x(n)\ln\bigg(\frac{n}{n-1}\bigg)\bigg)\ds\bigg|^p \\
&\qquad = \sum_{k=1}^m \bigg|  x(k)(1-k)\ln\bigg(\frac{k}{k-1}\bigg)+\sum_{n=k+1}^m x(n)\ln\bigg(\frac{n}{n-1}\bigg)\bigg|^p.
\end{align*}
So, to prove \eqref{Ineq4}, it suffices to prove that the following inequality holds
\begin{align}
\sum_{k=1}^m \bigg|  x(k)(1-k)\ln\bigg(\frac{k}{k-1}\bigg)&+\sum_{n=k+1}^m x(n)\ln\bigg(\frac{n}{n-1}\bigg)\bigg|^p\notag \\
&\qquad\geq \sum_{k=1}^m \bigg|-x(k)+\sum_{n=k}^m \frac{x(n)}{n}\bigg|^p.
\label{Ineq5}
\end{align}
Now, since $x$ is a nonnegative, nonincreasing sequence, there is a nonnegative vector $b=\{b(i)\}_{i=1}^m$ such that
$x(k)=\sum_{i=k}^m b(i),$
for all $k=1,\cdots, m$. On the one hand, we have that
\begin{align*}
&\sum_{k=1}^m \bigg|  x(k)(1-k)\ln\bigg(\frac{k}{k-1}\bigg)+\sum_{n=k+1}^m x(n)\ln\bigg(\frac{n}{n-1}\bigg)\bigg|^p  \\
&\qquad =\sum_{k=1}^m \bigg|  \sum_{i=k}^m b(i)(1-k)\ln\bigg(\frac{k}{k-1}\bigg)+\sum_{n=k+1}^m \sum_{i=n}^m b(i)\ln\bigg(\frac{n}{n-1}\bigg)\bigg|^p \\
&\qquad =\sum_{k=1}^m \bigg|  \sum_{i=k}^m b(i)(1-k)\ln\bigg(\frac{k}{k-1}\bigg)+\sum_{i=k+1}^m  b(i) \sum_{n=k+1}^i\ln\bigg(\frac{n}{n-1}\bigg)\bigg|^p \\
&\qquad= \sum_{k=1}^m \bigg|  \sum_{i=k}^m b(i)(1-k)\ln\bigg(\frac{k}{k-1}\bigg)+\sum_{i=k+1}^m  b(i)\ln\bigg(\frac{i}{k}\bigg)\bigg|^p\\
&\qquad= \sum_{k=1}^m \bigg|  \sum_{i=k}^m b(i)(1-k)\ln\bigg(\frac{k}{k-1}\bigg)+\sum_{i=k}^m  b(i)\ln\bigg(\frac{i}{k}\bigg)\bigg|^p \\
&\qquad= \sum_{k=1}^m \bigg|  \sum_{i=k}^m b(i)\bigg(\ln\bigg(\frac{i}{k}\bigg)-(k-1)\ln\bigg(\frac{k}{k-1}\bigg)\bigg)\bigg|^p.
\end{align*}
On the other hand, we have
\begin{align*}
\sum_{k=1}^m \bigg|-x(k)+\sum_{n=k}^m \frac{x(n)}{n}\bigg|^p &= \sum_{k=1}^m \bigg|-\sum_{i=k}^m b(i)+\sum_{n=k}^m \frac{1}{n}\sum_{i=n}^m b(i)\bigg|^p \\
&=\sum_{k=1}^m \bigg|-\sum_{i=k}^m b(i)+\sum_{i=k}^m b(i)\sum_{n=k}^i \frac{1}{n}\bigg|^p \\
&=\sum_{k=1}^m \bigg|\sum_{i=k}^m b(i)\bigg(-1+\sum_{n=k}^i \frac{1}{n}\bigg)\bigg|^p.
\end{align*}
Therefore, the inequality \eqref{Ineq5} is equivalent to 
\begin{align*}
&\sum_{k=1}^m \bigg|  \sum_{i=k}^m b(i)\bigg(\ln\bigg(\frac{i}{k}\bigg)-(k-1)\ln\bigg(\frac{k}{k-1}\bigg)\bigg)\bigg|^p  \geq \sum_{k=1}^m \bigg|\sum_{i=k}^m b(i)\bigg(-1+\sum_{n=k}^i \frac{1}{n}\bigg)\bigg|^p,
\end{align*}
which is the statement of Proposition \ref{Key_Prop}.

\medskip

The optimality of the constant $C_p^{1/p}$ is proven in Lemma \ref{lemma1} by considering the sequence $\{C^*e_m
\}_{m \ge 1}$.
\end{proof}
\end{theorem}

Let $C^{*2} = C^* \circ C^*$. Theorem~\ref{Main_Theorem} has the following equivalent formulation.

\begin{theorem}
Let $1 \le p \le 2$ and let $x$ be a nonnegative sequence such that $C^*x \in \ell^p(\mathbb{N})$. Then the following inequality holds and is sharp:
\begin{equation}
\label{Ineq6}
\|(C^{*2}-C^*)x\|_p \leq C_p^{1/p}\|C^*x\|_p.
\end{equation}
\begin{proof}
Let $x$ be a nonnegative sequence and set $y = C^*x$. Clearly, $y$ is a nonnegative, nonincreasing sequence, and by hypothesis, $y \in \ell^p(\mathbb{N})$. Applying \eqref{Ineq3} directly to $y$ yields \eqref{Ineq6}.

\medskip

Conversely, let $x = \{x(k)\}_{k \ge 1} \in \ell^p(\mathbb{N})$ be a nonnegative, nonincreasing sequence, and define $y = \{y(k)\}_{k \ge 1}$ by 
\begin{equation}
\label{Def_y}
y(k) = k \big(x(k) - x(k+1)\big), \quad k \ge 1.
\end{equation}
Since $x$ is nonnegative and nonincreasing, $y$ is clearly nonnegative. By assumption, $x = C^* y \in \ell^p(\mathbb{N})$. Applying \eqref{Ineq6} directly to $y$ completes the proof of \eqref{Ineq3}.

\medskip

To establish optimality, let $\{x_n\}_{n \ge 1}$ be a sequence in $\ell^p(\mathbb{N})$ of nonnegative, nonincreasing sequences such that 
\[
\lim_{n \to \infty} \frac{\|(C^*-I)x_n\|_p}{\|x_n\|_p} = C_p^{1/p}.
\]
Considering $\{y_n\}_{n \ge 1}$ defined as in \eqref{Def_y}, we have $C^*y_n = x_n \in \ell^p(\mathbb{N})$ for each $n \ge 1$, which gives
\[
\lim_{n \to \infty} \frac{\|(C^*-I)C^*y_n\|_p}{\|C^*y_n\|_p} = \lim_{n \to \infty} \frac{\|(C^*-I)x_n\|_p}{\|x_n\|_p} = C_p^{1/p}.
\]

\medskip

Conversely, to verify optimality from the dual perspective, let $\{y_n\}_{n \ge 1} \subset \ell^p(\mathbb{N})$ be a sequence of nonnegative sequences satisfying 
\[
\lim_{n \to \infty} \frac{\|(C^*-I)C^*y_n\|_p}{\|C^*y_n\|_p} = C_p^{1/p}.
\]
Defining $x_n = C^*y_n \in \ell^p(\mathbb{N})$ for each $n \ge 1$, it follows that
\[
\lim_{n \to \infty} \frac{\|(C^*-I)x_n\|_p}{\|x_n\|_p} = \lim_{n \to \infty} \frac{\|(C^*-I)C^*y_n\|_p}{\|C^*y_n\|_p} = C_p^{1/p},
\]
which completes the proof.
\end{proof}
\end{theorem}

By combining Proposition~\ref{Proposition_Bennet} and Theorem~\ref{Main_Theorem}, we obtain the optimal constant in an inequality relating $C-S$ and $C^*-I$ for decreasing sequences.

\begin{corollary}
Let $1<p\leq 2$ and let $x \in \ell^p(\mathbb{N})$ be a nonnegative, nonincreasing sequence. Then
\begin{equation}
\label{Ineq9}
\|(C^*-I)x\|_p \leq C_p^{1/p}(p-1)^{1/p} \|(C-S)x\|_p
\end{equation}
Furthermore, the constant $C(p)^{1/p}(p-1)^{1/p}$ is the best possible.
\begin{proof}
Let $1<p \leq 2$ and let $x \in \ell^p(\mathbb{N})$ be a nonnegative, nonincreasing sequence. Applying \eqref{Ineq3} and \eqref{Ineq7}, we obtain
$$\|(C^*-I)x\|_p \leq C(p)^{1/p} \|x\|_p \leq C(p)^{1/p}(p-1)^{1/p}\|(C-S)x\|_p.$$
To see the optimality of the constant in \eqref{Ineq9} we consider the sequence $\{C^*e_m\}_{m\geq 1}$. We have seen in \eqref{(C*-I)e_m} that 
$$
|(C^*-I)C^*e_m|= \begin{cases}
\frac{1}{m}\big|-1+\sum_{k=n}^m\frac{1}{k}\big| , & \text{if } n\leq m,\\
0,  & \text{if } n>m.
\end{cases}
$$
A straightforward calculation yields
$$
|(C-S)C^*e_m(n)|= \begin{cases}
0 , & \text{if } n< m,\\
\frac{1}{n},  & \text{if } n\geq m.
\end{cases}
$$
\begin{align*}
\lim_{m\to \infty} \frac{\|(C^*-I)C^*e_m\|_p^p}{\|(C-S)C^*e_m\|_p^p}&=\lim_{m\to \infty} \frac{ \frac{1}{m^p}\sum_{n=1}^m \big|-1+\sum_{k=n}^m \frac{1}{k}\big|^p}{\sum_{k=m}^\infty \frac{1}{n^p}}= \\
&=\frac{\underset{m\to \infty}{\lim}\frac{1}{m}\sum_{n=1}^m \big|-1+\sum_{k=n}^m \frac{1}{k}\big|^p}{ \underset{m\to \infty}{\lim} m^{p-1}\sum_{k=m}^\infty \frac{1}{n^p}} =C(p)(p-1).
\end{align*}
\end{proof}  
\end{corollary}

\medskip

\textbf{Acknowledgements:} The first author extends his sincere gratitude to his PhD supervisors, Santiago Boza and Javier Soria, for their continuous guidance and support throughout his doctoral studies. Additionally, he thanks the Institute of Mathematics of the Czech Academy of Sciences for hosting his research visit, during which part of this work was conducted.

\medskip

\textbf{Funding:} The first  author was partially supported by grants PID2020-113048GB-I00 and PID2024-155917NB-I00, funded by MCIN/AEI/10.13039/501100011033. The second author was partially supported by RVO: 67985840, Institute of Mathematics of the Czech Academy of Sciences.

\medskip

\textbf{Data availability:} No data were used for the research described in the article.

\medskip

\textbf{Conflicts of Interest:} The authors declare that they have no conflict of interest.

\end{document}